\documentclass[reqno, 12pt]{amsart}
\usepackage{amsmath,amsthm,amsfonts,amssymb}

\date{}
\newtheorem{theorem}{Theorem}[section]
\newtheorem{lemma}{Lemma}[section]
\newtheorem{corollary}{Corollary}[section]
\newtheorem{remark}{Remark}[section]

\newtheorem{example}{Example}[section]

\begin{document}

\centerline{\sc Existence theorems for nonlinear stationary Kolmogorov equations}
	
\centerline{\sc with partially degenerate diffusion matrices}

\vskip 2 ex

\begin{center}
{\sc Aziz M. Embarek${}^{a}$\footnote{Corresponding author
\par
e-mails: embarek.aziz@gmail.com (A.M.Embarek), shatiltop@mail.ru (D.V.Shatilovich).}, \,Dmitry V. Shatilovich${}^{a}$
}
\end{center}

\vskip 1. ex

\quad ${}^{a}$ Faculty of Mechanics and Mathematics, Lomonosov Moscow State
University, 119991, GSP-1, Leninskie Gory, Moscow, Russia;

\vskip 4 ex

{\bf Abstract} We study nonlinear stationary Kolmogorov equations with degenerate diffusion matrices and discontinuous coefficients. The existence of a solution is proved. We propose a new approach based on an integral condition with Lyapunov functions and a regularity of projections of solutions in the partially degenerate case. Examples are given to illustrate the results.

\vskip 2 ex

{\bf Keywords:} nonlinear Kolmogorov equations, invariant measures of diffusion processes, Lyapunov functions.

\vskip 2 ex

{\bf AMS Subject Classification:} 35J60, 35J70, 35R05

\vskip 2 ex
	
	\section{\sc Introduction}
	
	\vspace*{0.2cm}
	
We consider the following equation
\begin{equation}\label{eq1}
	\sum_{i,j=1}^d\partial_{x_i}\partial_{x_j}\bigl(a^{ij}(x,\mu)\mu\bigr)-\sum_{i=1}^d\partial_{x_i}\bigl(b^i(x,\mu)\mu\bigr)=0.
\end{equation}
For any probability measure $\mu$ the mappings $x\mapsto a^{ij}(x,\mu)$ and $x\mapsto b^i(x,\mu)$ are either undefined or are Borel functions, and the matrix $A = (a^{ij}(x,\mu))$ is symmetric and nonnegative definite. We say that a nonnegative locally finite Borel measure $\mu$ is a solution to equation (\ref{eq1}) if the mappings $x\mapsto a^{ij}(x,\mu)$ and $x\mapsto b^i(x,\mu)$ are integrable with respect to $\mu$ over any compact set and the identity
	\begin{equation}\label{integr}
	\int_{\mathbb{R}^d}L_\mu u(x)\mu(dx)=0.
\end{equation}
holds for every function $u\in C_0^{\infty}(\mathbb{R}^d)$. Here we denote
$$
L_\mu u(x)=\sum_{i, j=1}^da^{ij}(x,\mu)\partial_{x_i}\partial_{x_j}u(x)+\sum_{i=1}^db^i(x,\mu)\partial_{x_i}u(x).
$$
We will prove that equation (\ref{eq1}) has a probability solution, that is, a solution for which the equality $\mu(\mathbb R^d)=1$ holds, in the case when the matrix $A = (a^{ij})$ is partially degenerate, the coefficients are unbounded and discontinuous in $x$. The integral condition with a Lyapunov function will play a key role in the proof.

Interest in probability solutions is connected with the fact that an invariant measure of a diffusion process with the generator $L_\mu$ satisfies equation (\ref{eq1}). A proof of this fact can be found in \cite[chapter 5]{book}. By the superposition principle (see \cite{ZV23}, \cite{BRSsup23}) in a general situation each probability solution $\mu$ gives rise to a measure $P$ on $C([0, +\infty), \mathbb{R}^d)$ such that $P$ is a solution to the martingale problem with $L_\mu$ and for all $t\ge 0$ the equality $P(\omega\colon \omega(t)\in B)=\mu(B)$ holds. According to \cite[chapter 4]{IW89}, this allows us to construct a probability space with a Wiener process $W$ and an adapted process $X$ such that 
$$
dX_t = \sqrt{2A(X_t,\mu)}dW_t + b(X_t,\mu)dt
$$
and the distribution of the random variable $X_t$ coincides with $\mu$ for all $t \ge 0$. The last equation is often referred as to the McKean--Vlasov or mean-field stochastic differential equation. The existence of a probability solution to equation (\ref{eq1}) also allows, in the case of irregular coefficients growing at infinity, to construct a sub-Markov semigroup with generator $L_\mu$ (see \cite[chapter 5]{book} and \cite{LiSt}).

The work on stochastic differential equations with coefficients that depend on the law of the solution was initiated by the papers of Kac \cite{Kac56} in kinetic theory, as a stochastic counterpart of the Vlasov equation in plasma physics \cite{Vlasov68}, and of McKean \cite{McK66} for nonlinear parabolic partial differential equations. It describes the limiting behaviour of an individual particle evolving within a large system of particles interacting through its empirical measure, as the size of the population grows to infinity. The behaviour of the particle system is ruled by the propagation of chaos phenomenon as originally studied by McKean \cite{McK67}. The results on the existence and uniqueness of a solution for such stochastic differential equations can be found in \cite{BS24}, \cite{MV20}, \cite{HRW21}. McKean--Vlasov equations play an important role in the actively developing mean-field games theory, which has been introduced to solve the problem of existence of an approximate Nash equilibrium for differential games with a large number of players (see \cite{LL07}, \cite{CD18}, \cite{SS25}, \cite{KT19}). The case $A = 0$ corresponds to deterministic mean field games which, for example, are discussed in papers \cite{A20}, \cite{CG15}. The books \cite{J16}, \cite{K19} study equations of a general form, including the nonlinear Kolmogorov equations.

The existence of invariant measures, corresponding to McKean--Vlasov equations, and the convergence of solutions to an invariant measure, as time tends to infinity, have been investigated by many authors (see \cite{W21}, \cite{ButS17}, \cite{ABR19}, \cite{Z21}). In \cite{AV26} the stability of solutions to equation~(\ref{eq1}) in the case $A = 0$ is considered. In this paper we focus on the existence of probability solutions to equation (\ref{eq1}), rather than on the existence of invariant measures.

A standard global condition, ensuring the existence of solutions to McKean–Vlasov equations and parabolic Kolmogorov equations, is the existence of a Lyapunov function, that is, a nonnegative function
$V\in C^2(\mathbb{R}^d)$ such that
\begin{equation}
	\label{cond:infty}
	\lim_{|x|\to+\infty}V(x)=+\infty
\end{equation}
and for some positive constants $C_1, C_2$ the inequality 
\begin{equation}
	\label{lyap_cond1}
	L_\mu V(x)\le C_1+C_2 V(x)
\end{equation}
holds for all $x\in \mathbb R^d$. In \cite{HSS21} the existence of solutions to McKean--Vlasov equations for a wider class of Lyapunov functions was established. So inequality (\ref{lyap_cond1}) is replaced by the following: there exist positive constants $C_1, C_2$, a nonnegative function $V \in C^2(\mathbb R^d)$, satisfying~(\ref{cond:infty}), and a nonnegative function $W$ such that $W(\cdot,t,\mu)\in C^2(\mathbb R^d)$, $W(x,\cdot,\mu) \in C^1([0,\infty))$, $W(x,t,\cdot)$ is differentiable in a suitable sense (see \cite[Definition A.5.]{HSS21}) and the inequalities
\begin{equation}
	\label{lyap_cond2}
	\int_{\mathbb R^d} L_\mu W d\mu \le C_1 + C_2 \int_{\mathbb R^d}W d\mu\quad\text{ and }\quad \int_{\mathbb R^d} V d\mu \le \int_{\mathbb R^d} W d\mu
\end{equation}
holds for all probability measures $\mu$ with compact support. Here the operator $L_\mu$ includes derivatives with respect to measures. In \cite[Theorem 6.6]{BS24} the existence of a probability solution to parabolic Kolmogorov equations was established in the case when the inequalities (\ref{lyap_cond2}) are satisfied for all sub-probability measures $\mu$ with compact support, $W$ depends only on $x$ and $W = V.$ \cite[Example 6.7]{BS24} shows that inequality (\ref{lyap_cond2}) is less restrictive than (\ref{lyap_cond1}) even in this case and allows us to construct a solution for all $t\ge 0$, while inequality (\ref{lyap_cond1}) gives us a solution only for $t \in [0, \tau]$, where $\tau$ is sufficiently small.

In the case of stationary Kolmogorov equations the situation is more complicated, since in order to prove the existence of solutions it is necessary to assume that the more restrictive inequality 
\begin{equation}
	\label{lyap_cond3}
L_\mu V(x)\le C-\Lambda V(x)
\end{equation}
holds for all $x\in \mathbb R^d$, where $C, \Lambda$ are positive constants. According to \cite[Proposition 5.3.9]{book}, under broad assumptions on the coefficients, the existence of a probability solution to equation (\ref{eq1}) leads to the existence of a Lyapunov function. However, without additional local conditions on the coefficients, the existence of a Lyapunov function does not yet guarantee the existence of a probability solution. So the first example of \cite{ShSh24} shows that, in the case of a degenerate matrix $A$ and a discontinuous coefficient $b$, equation (\ref{eq1}) may have no solutions. Theorem~\ref{main_result} is the main result of this paper, which establishes the existence of a probability solution to a stationary Kolmogorov equation for which there exist a nonnegative function $V \in C^2(\mathbb R^d)$, satisfying~(\ref{cond:infty}), and a nonnegative function $W$ such that $x \mapsto W(x,\mu)$ belongs to $C^2(\mathbb R^d)$ and the inequality
	$$
	\int_{\mathbb R^d}L_\mu W d\mu \le C - \Lambda \int_{\mathbb R^d}V d\mu,
	$$
holds for all probability measures $\mu$ with compact support. This inequality is similar to $(\ref{lyap_cond2})$, but the negative coefficient $(-\Lambda)$ is taken into account. Here the differentiability of $W$ with respect to measures is not assumed and the operator $L_\mu$ contains only derivatives with respect to $x$. Examples \ref{ex2}, \ref{ex3} and \ref{ex4} show that this condition allows us to obtain the existence of a probability solution for a larger class of Kolmogorov equations than condition (\ref{lyap_cond3}).

Another innovation of this work is the consideration of nonlinear stationary Kolmogorov equations with partially degenerate diffusion matrices $A$, that is, the matrix $A$ is degenerate only in some of the variables. We assume that the upper left $m\times m$ corner of the matrix $A(x,\mu)$ is non-degenerate for all probability measures $\mu$ and $x \in \mathbb R^d$. The result~\cite[Thorem~2.5.8]{book} gives sufficient conditions for the existence of a probability solution in the case when the matrix $A$ is non-degenerate, while \cite[Theorem 5.1]{BS24} gives that when the non-degeneracy is not assumed. In \cite{ShSh24} it has been investigated the case of linear stationary Kolmogorov equations with partially degenerate diffusion matrices. In this case the projection of a solution onto the first $m$ variables has a regular density, and local regularity conditions of the coefficients in the first $m$ variables, for the existence of a solution, can be relaxed: the coefficients of the equation can be discontinuous in $x_1,\ldots, x_m$. In this paper it is shown that the same is true for nonlinear stationary Kolmogorov equations. A typical example of a nonlinear partially degenerate Kolmogorov equation is provided by the operator
$$
L_\mu u(x)=\sum_{i=1}^m\partial_{x_i}^2u(x)+\sum_{i=1}^db^i(x, \mu)\partial_{x_i}u(x), \quad 1\le m\le d,
$$
which appears when we study stochastic differential equations, for which 
the diffusion process is described by ordinary differential equations with respect to some variables. For example, we can consider the system of stochastic Langevin equations
$$
d Z_t=Y_t\,dt, \quad dY_t=B(Y_t, Z_t, \mu)\,dt+\sqrt{2}\,dW_t,
$$
where $W_t$ is a Wiener process. The corresponding operator $L$ is
$$
Lu(y, z)=\partial_y^2u(y, z)+y\partial_zu(y, z)+B(y, z, \mu)\partial_yu(y, z).
$$

Establishing the uniqueness of even a probability solution to a stationary Kolmogorov equation is a more difficult problem than proving the uniqueness of an invariant measure for the corresponding McKean-Vlasov equation, since the class of measures under consideration is larger. One recent result, which does not assume non-degeneracy of diffusion matrices, is presented in \cite{BSS25}. A detailed review of known results for degenerate Kolmogorov equations can be found in \cite{R23}.

This paper consists of five sections. In Section 2 we discuss the main results. In Section~3~examples~illustrating the results are given. Auxiliary results are proved in Section 4. Section 5 is devoted to the proofs of the main results.

\section{Main results}
\label{sec2}

Let $V \in C^2(\mathbb R^d),\,V\ge 0$ and $\lim_{|x|\rightarrow\infty}V(x) = +\infty$. For example, one can take the function $V(x) = (1+|x|^2)^{p/2}$, where $p > 0$. The bounded Borel measure $\mu$ on $\mathbb R^d$ is called a probability measure if $\mu \ge 0$ and $\mu(\mathbb R^d) = 1$. For $R > 0$, let us denote by $\mathcal P_R(V)$ the set of probability measures on $\mathbb R^d$ such that
$$
\int_{\mathbb R^d}V d\mu \le R.
$$
The set $\mathcal P_R(V)$ is non-empty for sufficiently large $R$. This set is convex and compact in the space of probability measures with the weak topology. Let $\mathcal P(V)$ denote the set of all probability measures $\mu$ such that $V \in L^1(\mu)$.

We say that a sequence $\mu_n \in \mathcal P(V)$ converges $V$-weakly to a measure $\mu \in \mathcal P(V)$ if
$$
\lim_{n \rightarrow\infty}\int_{\mathbb R^d}f(x)\mu_n(dx) = \int_{\mathbb R^d}f(x)\mu(dx)
$$
for every continuous function $f$ such that 
$$\lim_{|x|\rightarrow\infty}\frac{f(x)}{V(x)} = 0.$$ In \cite[Section 5]{BS24} the following proposition is proved: if for some $R > 0$ a sequence $\mu_n\in \mathcal P_{R}(V)$ converges weakly to a measure $\mu\in \mathcal P_{R}(V)$, then $\mu_n$ converges $V$-weakly to $\mu$.

In what follows, we assume that the upper left $m\times m$ corner of the matrix $A(x,\mu)$ is non-degenerate for all probability measures $\mu$ and $x \in \mathbb R^d$. Let us split the coordinates of variables into two parts $x = (y, z)$, where $y = (x_1, \ldots, x_m)$ and $z = (x_{m+1}, \ldots, x_d)$. 

Let us formulate our main assumptions.

\vspace*{0.2cm}

{\bf (H1) Local conditions:}

\vspace*{0.2cm}

{\bf (H1.1)} There exists an integer $0 \le m \le d$ such that for every $R > 0$ and for every cube~$K\subset~\mathbb{R}^d$ the inequality
$$
\sum_{i, j\le m}a^{ij}(x,\mu)\xi_i\xi_j\ge \lambda_{K,R}|\xi|^2
$$
holds for some number $\lambda_{K,R}>0$, for all $x\in K$, $\mu \in \mathcal P_{R}(V)$ and $\xi\in\mathbb R^m$.

\vspace*{0.2cm}

{\bf (H1.2)} For every $R > 0$, for every cube $K\subset\mathbb{R}^d$ and for all $i,j$ we have
$$\sup_{x\in K,\, \mu \in \mathcal P_{R}(V)}\left(|a^{ij}(x, \mu)| + |b^i(x, \mu)|\right) < \infty.
$$

{\bf (H1.3)} For every $R > 0$ and for every cube $K = K_y\times K_z\subset\mathbb{R}^d$ there exists a nonnegative continuous and monotone function $\omega_{K,R}$ on $[0, +\infty)$ such that $\omega_{K,R}(0)=0$ and the inequality
$$
\sup_{y \in K_y,\,\mu\in \mathcal P_R(V)}\Bigl(|a^{ij}(y, z, \mu)-a^{ij}(y, z', \mu)|+|b^i(y, z, \mu)-b^i(y, z', \mu)|\Bigr)\le \omega_{K,R}(|z-z'|),
$$
holds for all $i, j$ and for all $z, z'\in K_z$.

\vspace*{0.2cm}

{\bf (H2) Continuity in measures:}

\vspace*{0.2cm}

Here we distinguish between three cases:

\vspace*{0.2cm}

{\bf (The case $m = 0$)} For every $R > 0$, for all $i,j$ and for all $x \in \mathbb R^d$ the $V$-weak convergence of a sequence $\mu_n\in \mathcal P_R(V)$ to a measure $\mu\in \mathcal P_R(V)$ implies the equality
$$
\lim_{n\rightarrow \infty}\Bigl(|a^{ij}(x, \mu_n) - a^{ij}(x,\mu)| + |b^i(x, \mu_n) - b^i(x,\mu)|\Bigr) = 0.
$$

{\bf (The case $m = 1$)} For every $R > 0$, for every cube $K_y \subset\mathbb R^m$, for all $i,j$ and $z \in \mathbb R^{d-m}$ the $V$-weak convergence of a sequence $\mu_n\in \mathcal P_R(V)$ to a measure $\mu\in \mathcal P_R(V)$ implies the equality
$$
\lim_{n\rightarrow\infty}\int_{K_y}\Bigl(|a^{ij}(y,z,\mu) - a^{ij}(y,z,\mu_n)|^{1+\gamma} + |b^i(y,z,\mu) - b^i(y,z,\mu_n)|^{1+\gamma}\Bigr)dy = 0
$$
for some $\gamma > 0$.

\vspace*{0.2cm}

{\bf (The case $m \ge 2$)} For every $R > 0$, for every cube $K_y \subset\mathbb R^m$, for all $i,j$ and $z \in \mathbb R^{d-m}$ the $V$-weak convergence of a sequence $\mu_n\in \mathcal P_R(V)$ to a measure $\mu\in \mathcal P_R(V)$ implies the equality
$$
\lim_{n\rightarrow\infty}\int_{K_y}\Bigl(|a^{ij}(y,z,\mu) - a^{ij}(y,z,\mu_n)|^m + |b^i(y,z,\mu) - b^i(y,z,\mu_n)|^m\Bigr)dy = 0
$$

The case $m = 0$ corresponds to equation (\ref{eq1}), in which the matrix $A$ can be fully degenerate, for example, when $A = 0$. In the case $m = 1$ the matrix $A$ does not degenerate in one of the coordinates, for example, this is the case for $A = \operatorname{diag}(1,0,\ldots,0)$. Similarly, in the case $m \ge 2$, we can take as an example $A = \operatorname{diag}(1,1,\ldots,1,0,\ldots,0)$, where the number one is repeated $m$ times. Let us compare the assumptions for the cases $m = 0$ and $m = 1$ when $d = 1$. In the case $m = 0$ it is necessary to assume (H1.3), which implies uniform convergence in $x$ on every compact set in (H2). In the case $m = 1$ we do not need to assume (H1.3). Moreover, it is sufficient to assume the integral convergence rather than uniform convergence in (H2).

\vspace*{0.2cm}

{\bf (H3) Global conditions:}

\vspace*{0.2cm}

{\bf (H3.1)} There exist positive numbers $C, \Lambda$ and a nonnegative function~$W$ such that the mapping $x\mapsto W(x, \mu)$ belongs to $C^2(\mathbb R^d)$, $\lim_{|x|\rightarrow\infty}W(x,\mu) = +\infty$ and 
$$
	\int_{\mathbb R^d}L_\mu W(x,\mu)\, \mu(dx) \le C - \Lambda \int_{\mathbb R^d}V(x)\,\mu(dx).
$$
for all probability measures $\mu$ with compact support.

\vspace*{0.2cm}

{\bf (H3.2)} There exist positive numbers $C_1, C_2$ and a nonnegative continuous function $H$ such that for all probability measures $\mu$ with compact support one has
$$
	\lim_{|x|\rightarrow\infty}\frac{H(x)}{V(x)} = 0,\qquad -L_\mu W(0,\mu) \le C_1 + C_2\int_{\mathbb R^d} H(x) \mu(dx).
$$

\vspace*{0.2cm}

The following statement is a generalization of \cite[Theorem 1]{ShSh24} for the case of Kolmogorov equations, the coefficients of which depend on the solution. It plays an important role in the proof of Theorem \ref{main_result}. In the case $m = 0$ the statement coincides with \cite[Theorem 5.1]{BS24} when~$\delta=0$.

\begin{theorem}
	\label{cor2}
	Assume that ${\rm (H1)}, {\rm (H2)}$ are fulfilled. Suppose that there exist positive numbers $C$ and $\Lambda$ such that for all $x\in \mathbb R^d$ and $\mu \in \mathcal P(V)$ we have the inequality
	\begin{equation}
		\label{cor2: cond}
		L_\mu V(x) \le C - \Lambda V(x) 
	\end{equation}
	Then equation {\rm (\ref{eq1})} has a probability solution in the set $\mathcal P(V)$.
\end{theorem}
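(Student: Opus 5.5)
We will use a fixed-point argument on the convex compact set $\mathcal P_R(V)$, relying on the linear theory of \cite[Theorem 1]{ShSh24} for each frozen measure. Fix $R$ large, say $R\ge C/\Lambda$. For every $\sigma\in\mathcal P_R(V)$ consider the linear equation
$$
\sum_{i,j}\partial_{x_i}\partial_{x_j}\bigl(a^{ij}(x,\sigma)\mu\bigr)-\sum_i\partial_{x_i}\bigl(b^i(x,\sigma)\mu\bigr)=0 .
$$
By (H1) with $\mu=\sigma$ fixed, its coefficients are locally bounded, nondegenerate in the $y$-block, and uniformly continuous in $z$ locally uniformly in $y$. Moreover, $V$ is a Lyapunov function with $L_\sigma V\le C-\Lambda V$. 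Hence \cite[Theorem 1]{ShSh24} gives a probability solution $\mu$. The standard Lyapunov estimate (test with $\zeta_N(V)$ for cutoffs $\zeta_N$ and let $N\to\infty$) yields $\Lambda\int V\,d\mu\le C$, so $\mu\in\mathcal P_R(V)$.

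Define the set-valued map $\Phi(\sigma)$ as the set of all probability solutions $\mu\in\mathcal P_R(V)$ of the linear equation with frozen $\sigma$. The set $\Phi(\sigma)$ is nonempty and convex, because the equation is linear in $\mu$ and $\mathcal P_R(V)$ is convex. We then apply the Kakutani--Fan--Glicksberg theorem in the locally convex space of bounded measures with the weak topology; $\mathcal P_R(V)$ is convex and compact there (and metrizable). It remains to check that the graph of $\Phi$ is closed. A fixed point $\mu\in\Phi(\mu)$ is exactly a probability solution of (\ref{eq1}) in $\mathcal P(V)$.

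\textbf{Closed graph (main obstacle).} Let $\sigma_n\to\sigma$ and $\mu_n\to\mu$ weakly with $\mu_n\in\Phi(\sigma_n)$, all in $\mathcal P_R(V)$. By the result of \cite[Section 5]{BS24} cited above, both convergences are $V$-weak. We must pass to the limit in $\int L_{\sigma_n}u\,d\mu_n=0$ for $u\in C_0^\infty$ with support in a cube $K$. We write
$$
\int L_{\sigma_n}u\,d\mu_n-\int L_\sigma u\,d\mu=\int (L_{\sigma_n}-L_\sigma)u\,d\mu_n+\Bigl(\int L_\sigma u\,d\mu_n-\int L_\sigma u\,d\mu\Bigr).
$$

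\emph{Case $m=0$.} By (H1.3) with $m=0$, the family $x\mapsto a^{ij}(x,\sigma_n),b^i(x,\sigma_n)$ is equicontinuous on $K$ and uniformly bounded by (H1.2). Pointwise convergence (H2) then upgrades to uniform convergence on $K$, so the first term tends to $0$. The second term tends to $0$ because $L_\sigma u$ is continuous with compact support.

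\emph{Case $m\ge1$.} Here the coefficients may be discontinuous in $y$, and we use regularity of projections. As in \cite{ShSh24}, the nondegeneracy (H1.1), uniform in $\sigma_n\in\mathcal P_R(V)$, together with the bounds (H1.2), gives a uniform estimate on the $y$-densities: on $K=K_y\times K_z$ one has $\mu_n(dy\,dz)=\varrho_n(y,z)\,dy\,\nu_n(dz)$-type disintegrations (or local densities after smoothing in $z$). These densities are bounded in $L^{m'}(K_y)$ for $m\ge2$ (Bogachev--Krylov--Röckner type estimates, since $\varrho\in L^{m/(m-1)}$), and in $L^{(1+\gamma)'}$ for $m=1$ (bounded densities in dimension one). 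This uniformity in $n$ is the key technical point.

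\emph{Passage to the limit for $m\ge1$.} With these bounds, Hölder's inequality and (H2) give $\int_{K_y}|a(y,z,\sigma_n)-a(y,z,\sigma)|\varrho_n\,dy\to0$ for each fixed $z$. Using (H1.3) to discretize $z$ on a finite grid uniformly, we obtain convergence uniformly in $z\in K_z$, which shows that the first term tends to $0$. For the second term, $L_\sigma u$ is continuous in $z$ but discontinuous in $y$. We approximate it by functions continuous in $x$ (mollifying in $y$ only) in the $L^{p}(dy)$ norm, uniformly in $z$, and again use the uniform $L^{p'}$ bounds on the $y$-densities of $\mu_n$ and $\mu$.

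The main difficulty is precisely obtaining these density bounds uniformly in $n$ for measures that are degenerate in $z$. We expect to obtain them by repeating the argument of \cite[Theorem 1]{ShSh24}, with constants depending only on $\lambda_{K,R}$ and the bounds in (H1.2), hence uniform over $\mathcal P_R(V)$.
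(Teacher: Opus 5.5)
Your proposal is correct and follows essentially the same route as the paper. The paper also uses linear existence via \cite[Theorem 1]{ShSh24} for the frozen measure, the Lyapunov bound $\int V\,d\mu\le C/\Lambda$, and a Kakutani--Ky Fan fixed point on $\mathcal P_R(V)$. Its closed-graph step likewise splits off $\int (L_{\sigma_n}-L_\sigma)u\,d\mu_n$, discretizes in $z$ via (H1.3), applies H\"older and (H2) with uniform $L^{r}(K_y)$ bounds on the projection densities of $\eta\mu_n$, and mollifies the coefficients in $y$. The uniform density bound you flag as the main obstacle is the paper's Lemma~\ref{lemma: regular}, taken from \cite[Lemma 1]{ShSh24} with constants depending only on $\lambda_{Q,R}$, $C_{Q,R}$, $\omega_{Q,R}$ and $\eta$; the same bound for the limit $\mu$ follows by passing to the weak limit in $\int\psi(y)\eta(z)\,d\mu_n\le S\|\psi\|_{L^{r'}}$.
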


The main result of the paper is the following theorem.

\begin{theorem}
	\label{main_result}
	Assume that ${\rm (H1)}, {\rm (H2)}$ and ${\rm (H3)}$ are fulfilled. Then equation {\rm (\ref{eq1})} has a probability solution in the set $\mathcal P(V)$.
\end{theorem}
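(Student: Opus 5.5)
We derive Theorem~\ref{main_result} from Theorem~\ref{cor2} by a fixed-point (Kakutani/Schauder-type) argument on the convex compact set $\mathcal P_R(V)$. The difficulty is that (H3.1) is only an integral inequality along $\mu$ itself, not a pointwise bound $L_\mu V\le C-\Lambda V$. So we "freeze" the measure in the coefficients and modify the frozen equation so that it satisfies a pointwise Lyapunov condition.

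\emph{Step 1: the frozen linear problem.} Fix $\sigma\in\mathcal P_R(V)$ and consider the linear operator $L_\sigma$, with coefficients $a^{ij}(x,\sigma)$, $b^i(x,\sigma)$. The integral condition (H3.1) does not by itself give a pointwise Lyapunov function for $L_\sigma$. We therefore perturb the operator: for a parameter $\varepsilon>0$, replace the drift by $b_\varepsilon(x,\sigma)=b(x,\sigma)-\varepsilon\nabla V(x)\,\theta(x)$, or multiply the coefficients by a cutoff, so that
\[
L_{\sigma,\varepsilon}V\le C_\varepsilon-\Lambda_\varepsilon V
\]
holds pointwise. The linear (or $m$-partially degenerate) version of Theorem~\ref{cor2}, i.e.\ \cite[Theorem 1]{ShSh24}, then gives a probability solution $\mu$ of $L_{\sigma,\varepsilon}^*\mu=0$. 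A simpler alternative is to apply Theorem~\ref{cor2} directly to the nonlinear perturbed equation with coefficients $a(x,\mu)$, $b(x,\mu)-\varepsilon\nabla V$. This avoids the fixed-point step, since Theorem~\ref{cor2} already contains it, and (H1),(H2) persist because $\nabla V$ is continuous and independent of $\mu$. For this we must check (\ref{cor2: cond}), which needs $a$, $b$ controlled by $|\nabla V|^2$; if that fails, we instead use a truncation $a_N$, $b_N$ of the coefficients outside a ball $|x|\le N$, replaced there by the Ornstein--Uhlenbeck-type coefficients $\operatorname{diag}(I_m,0)$ and $-x$.

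\emph{Step 2: uniform moment bound.} Let $\mu_N$ be the solutions of the approximate equations. We want $\int V\,d\mu_N\le R_0$ independently of $N$. Test the equation with $W(\cdot,\mu_N)$, justified via cutoffs $\zeta_k(W)$ using $W\to\infty$, to get $\int L_{\mu_N}W\,d\mu_N\ge 0$ up to the truncation error. Then (H3.1), applied through compactly supported approximations of $\mu_N$, gives $\Lambda\int V\,d\mu_N\le C$. Condition (H3.2) enters here: when $\mu_N$ is approximated by compactly supported measures $\mu_N^k$ (for instance, restricting and putting the remaining mass at $0$), the extra term $-L W(0,\cdot)$ times a small mass is controlled by $C_1+C_2\int H\,d\mu$, which is negligible because $H=o(V)$. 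This lets us pass from compactly supported measures to $\mu_N$. We expect this step to be the main obstacle. The difficulty is that (H3.1) is stated only for compactly supported $\mu$, whereas the test-function argument requires evaluating $L_\mu W(\cdot,\mu)$ at the non-compactly supported solution. First we would try to apply (H3.1) to $\mu^k=\mu|_{B_k}+\mu(B_k^c)\delta_0$, use the equation for $\mu$ at fixed frozen coefficients, and control the discrepancy by (H3.2).

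\emph{Step 3: passage to the limit.} The family $\{\mu_N\}\subset\mathcal P_{R_0}(V)$ is tight, so a subsequence converges weakly, and hence $V$-weakly, to some $\mu\in\mathcal P_{R_0}(V)$. To pass to the limit in $\int L_{\mu_N}u\,d\mu_N$ we use (H2) together with the regularity of the $y$-projections. For $m\ge1$, the conditional densities in $y$ are locally bounded in $L^{m'}$ (respectively $L^{(1+\gamma)'}$) uniformly in $N$, by (H1.1) and the estimates of \cite{ShSh24}. The continuity in $z$ from (H1.3) handles the degenerate variables. This is done exactly as in the proof of Theorem~\ref{cor2}, so $\mu$ is a probability solution of (\ref{eq1}) lying in $\mathcal P(V)$.
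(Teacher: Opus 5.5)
Your proposal has a genuine gap. It appears where you expect it, in Step~2, but it starts in Step~1: your approximating equation keeps the full measure $\mu$ in the coefficients.

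\textbf{Step 1 does not reach Theorem~\ref{cor2}.} That theorem needs $L_\mu V\le C-\Lambda V$ with constants uniform over all $\mu\in\mathcal P(V)$. Assumption (H1.2), however, bounds $a^{ij}(x,\mu)$ and $b^i(x,\mu)$ on $\{|x|\le N\}$ only for $\mu\in\mathcal P_R(V)$, with a constant depending on $R$. In Example~\ref{ex2} the coefficients grow like powers of $\int|y|\,d\mu$, so truncating in $x$ alone does not make Theorem~\ref{cor2} applicable. The Ornstein--Uhlenbeck replacement outside the ball also fails on two counts. It does not give (\ref{cor2: cond}) for a general $V$. It also leaves the ``truncation error'' $\int_{|x|>N}(\Delta_yW-x\cdot\nabla W)\,d\mu_N$ in Step~2 uncontrolled, because $W$ is unrelated to that operator.

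\textbf{Step 2 does not close.} Applying (H3.1) to $\mu^k=\mu_N|_{B_k}+\mu_N(B_k^c)\delta_0$ bounds $\int L_{\mu^k}W(\cdot,\mu^k)\,d\mu^k$. The equation for $\mu_N$, however, only gives information about $\int L_{\mu_N}W(\cdot,\mu_N)\,d\mu_N$. Letting $k\to\infty$ would require two things that are not available:
\begin{itemize}
\item continuity of $\mu\mapsto W(\cdot,\mu)$ and its derivatives, about which nothing is assumed;
\item uniform integrability at infinity of $L_{\mu^k}W(\cdot,\mu^k)$, whereas (H2) is only local in $x$.
\end{itemize}
Assumption (H3.2) bounds only the single value $-L_\mu W(0,\mu)$. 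It says nothing about the discrepancy between these two integrals.

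\textbf{The missing idea} is to build the compactly supported measure into the approximating equation, rather than approximating the solution afterwards. The paper uses
\[
L_{\mu,n}f=\varphi_nL_{\nu}f-\Lambda(1-\varphi_n)V,\qquad \nu=\varphi_n\mu+(1-I_{\mu,n})\delta_0 .
\]
Since $\nu$ is supported in $B_{n+1}$, it lies in $\mathcal P_{R_n}(V)$ with $R_n=\max_{\overline B_{n+1}}V$ for every $\mu$. So (H1.2) gives coefficient bounds uniform in $\mu$, and Theorem~\ref{cor2} produces $\mu_n$. Testing with the fixed function $W(\cdot,\nu_n)$ then gives the exact identity
\[
0\le\int L_{\mu_n,n}W(\cdot,\nu_n)\,d\mu_n=\int L_{\nu_n}W(\cdot,\nu_n)\,d\nu_n-\Lambda\int(1-\varphi_n)V\,d\mu_n-(1-I_{\mu_n,n})L_{\nu_n}W(0,\nu_n).
\]
From here the moment bound follows in three moves:
\begin{itemize}
\item (H3.1) applies directly to the compactly supported $\nu_n$;
\item $\nu_n\ge\varphi_n\mu_n$ merges the two $V$-terms into $-\Lambda\int V\,d\mu_n$;
\item (H3.2) together with $H=o(V)$ absorbs the last term, giving $\int V\,d\mu_n\le 2M/\Lambda$.
\end{itemize}
No limit in $k$ is taken, and $W$ is never evaluated at a measure other than $\nu_n$.

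Your $\mu^k$ and your use of (H3.2) for the atom at $0$ are the right ingredients; they only work once placed inside the equation this way. Step~3 then goes through as you describe, once you note that $\nu_n\to\mu$ weakly with $\nu_n\in\mathcal P_{R+V(0)}(V)$ and use the regularity estimate for the cut-off operator.
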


Example \ref{ex2} shows that there exist problems satisfying Theorem \ref{main_result} for which the assumptions of Theorem \ref{cor2} are not fulfilled.

\vspace*{0.2cm}

\begin{remark}
	In \cite{ShSh24} an example was constructed showing that a stationary Kolmogorov equation with coefficients discontinuous in $z$ may not have probability solutions at all. Therefore, assumption ${\rm (H1.3)}$ in Theorem \ref{main_result} cannot be weakened. The assumption with a Lyapunov function also cannot be removed: we can consider the equation $\Delta u = 0$, which has no probability solutions.
\end{remark}

\vspace*{0.2cm}

The following corollary is a simple consequence of Theorem \ref{main_result} in the case, when $W = V$. Examples \ref{ex3} and \ref{ex4} show that the class of problems satisfying the assumptions of Theorem~\ref{main_result} is larger than that of Corollary \ref{cor1}.

\begin{corollary}
	\label{cor1}
	Assume that conditions ${\rm (H1)}, {\rm (H2)}$ are fulfilled. Suppose that there exist positive numbers $C_1, C_2, C, \Lambda$ and a nonnegative continuous function $H$ such that 
	\begin{equation}
		\label{cor: cond1}
		\lim_{|x|\rightarrow\infty}\frac{H(x)}{V(x)} = 0, \qquad |a^{ij}(0,\mu)| + |b^i(0, \mu)| \le C_1 + C_2\int_{\mathbb R^d} H(x)\,\mu(dx) 
	\end{equation}
	and 
	\begin{equation}
		\label{cor: cond2}
		\int_{\mathbb R^d}L_\mu V(x)\, \mu(dx) \le C - \Lambda \int_{\mathbb R^d}V(x)\,\mu(dx) 
	\end{equation}
	for all probability measures $\mu$ with compact support. Then equation {\rm (\ref{eq1})} has a probability solution in the set $\mathcal P(V)$.
\end{corollary}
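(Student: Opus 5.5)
Corollary~\ref{cor1} follows from Theorem~\ref{main_result} with the choice $W(x,\mu)=V(x)$. Conditions (H1) and (H2) are assumed, so it remains to verify (H3.1) and (H3.2) for this $W$.

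For (H3.1), the function $x\mapsto W(x,\mu)=V(x)$ is nonnegative, belongs to $C^2(\mathbb R^d)$ and tends to $+\infty$ as $|x|\to\infty$, by the standing assumptions on $V$ in Section~\ref{sec2}. The integral inequality required in (H3.1) is exactly (\ref{cor: cond2}), with the same constants $C,\Lambda$ and for the same class of measures, namely probability measures with compact support.

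For (H3.2), take the function $H$ from (\ref{cor: cond1}); the condition $\lim_{|x|\to\infty}H(x)/V(x)=0$ is assumed there. It remains to estimate $-L_\mu V(0)$. Put $M=\max_{i,j}\bigl(|\partial_{x_i}\partial_{x_j}V(0)|+|\partial_{x_i}V(0)|\bigr)$, which is finite because $V\in C^2$. Then
$$
-L_\mu V(0)\le \sum_{i,j=1}^d |a^{ij}(0,\mu)|\,|\partial_{x_i}\partial_{x_j}V(0)| + \sum_{i=1}^d |b^i(0,\mu)|\,|\partial_{x_i}V(0)| \le (d^2+d)\,M\Bigl(C_1+C_2\int_{\mathbb R^d}H\,d\mu\Bigr),
$$
using (\ref{cor: cond1}) in the second step. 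Hence (H3.2) holds with the constants $C_1'=(d^2+d)MC_1$ and $C_2'=(d^2+d)MC_2$. If $M=0$, replace $M$ by $1$ so that the constants stay positive.

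Theorem~\ref{main_result} now yields a probability solution of (\ref{eq1}) in $\mathcal P(V)$. No step presents a real obstacle; the only care needed is that all constants produced are positive and independent of $\mu$, which the estimate above ensures.
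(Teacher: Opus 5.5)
Your proposal is correct and is essentially the paper's own argument: set $W(x,\mu)=V(x)$, note that (H3.1) is literally the assumed inequality (\ref{cor: cond2}), bound $-L_\mu V(0)$ by a constant built from the first and second derivatives of $V$ at the origin times $C_1+C_2\int H\,d\mu$ to get (H3.2), and apply Theorem~\ref{main_result}. The differences are cosmetic: your constant $(d^2+d)M$ versus the paper's $d^2C(V)$, and your explicit check that the constants stay positive.
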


\vspace*{0.2cm}

If we assume that ${\rm (H3.1)}$ holds for all sub-probability measures $\mu$ with compact support, then in Theorem \ref{main_result} assumption ${\rm (H3.2)}$ can be removed. The following theorem gives the corresponding result.
\begin{theorem}
	\label{th2}
	Assume that conditions ${\rm (H1)}, {\rm (H2)}$ are fulfilled. Suppose that ${\rm (H3.1)}$ holds for all sub-probability measures $\mu$ with compact support. Then equation {\rm (\ref{eq1})} has a probability solution in the set $\mathcal P(V)$.
\end{theorem}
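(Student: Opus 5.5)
The plan is to reduce Theorem~\ref{th2} to Theorem~\ref{main_result}, or more precisely to the approximation scheme behind it, by showing that (H3.2) is needed only to control $L_\mu W(0,\mu)$. When (H3.1) holds for sub-probability measures, that control follows from (H3.1) itself. Let $\mu$ be a probability measure with compact support and let $t\in(0,1]$. The measure $\nu_t=(1-t)\delta_0+t\mu$ is again a compactly supported probability measure. The measure $t\mu$ is a compactly supported sub-probability measure, and so are $\varepsilon\delta_0+t\mu$ for $\varepsilon\in[0,1-t]$. The extended (H3.1) gives
\[
\int L_{\nu}W(x,\nu)\,\nu(dx)\le C-\Lambda\int V\,d\nu
\]
for $\nu=t\mu$ and for $\nu=(1-t)\delta_0+t\mu$. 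The key observation is that letting the mass of the measure vary isolates the contribution of the point $0$. The coefficients are evaluated at $\nu$, so the nonlinear dependence is not an obstruction at this stage.

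Rather than derive a pointwise bound, I would rerun the proof of Theorem~\ref{main_result} and find where (H3.2) enters. Presumably it enters as follows. One truncates the coefficients outside a ball, or passes to measures of the form $(1-\varepsilon)\mu+\varepsilon\delta_0$, to get problems satisfying Theorem~\ref{cor2}, whose solutions $\mu_n$ exist. One then needs the a priori bound $\int V\,d\mu_n\le R$ uniformly in $n$, and (H3.2) controls the error term coming from the mass near the origin. In the sub-probability version I would replace that mixing step by a cutoff. Set $\mu_n^{(R)}=\mu_n|_{B_R}$, a compactly supported sub-probability measure. Applying (H3.1) to $\mu_n^{(R)}$ yields
\[
\int_{B_R}L_{\mu_n^{(R)}}W\,d\mu_n\le C-\Lambda\int_{B_R}V\,d\mu_n
\]
with no term at $0$. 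Combining this with the stationary identity $\int L_{\mu_n}W\,d\mu_n=0$, justified by approximating $W$ with compactly supported functions, and letting $R\to\infty$ gives $\Lambda\int V\,d\mu_n\le C$. That is the uniform moment bound needed.

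The rest follows Theorem~\ref{main_result} unchanged. The measures $\mu_n$ lie in $\mathcal P_{C/\Lambda}(V)$, which is weakly compact, so we extract a weakly convergent subsequence; it then converges $V$-weakly by the proposition from \cite[Section 5]{BS24}. For $m\ge1$ we use the regularity of the projections onto the $y$-variables, together with (H1.3) and (H2), to pass to the limit in $\int L_{\mu_n}u\,d\mu_n$. The limit is a probability solution in $\mathcal P(V)$.

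The main obstacle is the cutoff step. The coefficients at $\mu_n^{(R)}$ differ from those at $\mu_n$, so passing from $\int_{B_R}L_{\mu_n^{(R)}}W\,d\mu_n$ to $\int L_{\mu_n}W\,d\mu_n$ requires continuity of the coefficients, and of $W$, with respect to the measure as $R\to\infty$. I would first try to arrange the approximating problems so that the coefficients are frozen at a fixed measure, which makes the equation linear at that level. The fixed point is then taken over $\mathcal P_R(V)$ only at the end, with (H2) supplying the continuity, and the truncation enters only through the weight. If this fails, the fallback is to use the two sub-probability inequalities at $\varepsilon\delta_0+t\mu$ and let $\varepsilon\to0$. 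This should produce a bound on $-L_\mu W(0,\mu)$ in terms of $\int V\,d\mu$ with a small constant, which serves in place of (H3.2).
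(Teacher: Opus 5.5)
Your proposal has a genuine gap in the uniform moment bound. You correctly locate where (H3.2) enters the proof of Theorem~\ref{main_result}: the mass $1-I_{\mu_n,n}$ placed at the origin produces the term $-(1-I_{\mu_n,n})L_{\nu_n}W(0,\nu_n)$. Replacing the $\delta_0$-mixing by a cutoff is also the right instinct.

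The problem is where you put the cutoff. You apply (H3.1) to $\mu_n^{(R)}$ \emph{after} $\mu_n$ has been produced. So what you control is $\int L_{\mu_n^{(R)}}W(\cdot,\mu_n^{(R)})\,d\mu_n^{(R)}$, while the equation satisfied by $\mu_n$ involves the coefficients and $W$ at a different measure. Passing $R\to\infty$ would need two things. First, continuity of $\mu\mapsto W(x,\mu)$ and of its $x$-derivatives, which is not assumed at all. Second, convergence of the coefficients strong enough to integrate the unbounded derivatives of $W$ over growing balls. That is essentially the moment bound you are trying to prove, and (H2) only gives local convergence along $V$-weakly convergent probability measures.

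Two further problems. The identity $\int L_{\mu_n}W\,d\mu_n=0$ is either circular (if $\mu_n$ already solves (\ref{eq1})) or refers to an approximating equation you never write down. For unbounded $W$, only the one-sided inequality $\int LW\,d\mu_n\ge 0$ from \cite[Theorem 2.3.2]{book} is available, and only under an integrability that must be arranged.

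Your fallback fails for a sign reason. (H3.1) at $\varepsilon\delta_0+t\mu$ and at $t\mu$ gives two upper bounds, with different coefficients and different $W$. No upper bound on $-L_\mu W(0,\mu)$ can be extracted from them; at most you get an upper bound on $L_\nu W(0,\nu)$, the opposite of what (H3.2) supplies.

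The missing idea is to build the cutoff into the measure argument of the approximating nonlinear problem. Then the measure in the coefficients and the measure you integrate against coincide by construction, and no limit in $R$ is needed. The paper uses $L_{\mu,n}f=\varphi_nL_{\varphi_n\mu}f-\Lambda(1-\varphi_n)V$. Its second- and first-order coefficients $\varphi_na^{ij}(x,\varphi_n\mu)$, $\varphi_nb^i(x,\varphi_n\mu)$ vanish outside $B_{n+1}$.

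This operator satisfies the pointwise condition of Theorem~\ref{cor2} with an $n$-dependent constant $C_n$, which gives a solution $\mu_n\in\mathcal P(V)$ of $L^*_{\mu_n,n}\mu_n=0$. Since $L_{\mu_n,n}W(\cdot,\varphi_n\mu_n)=-\Lambda V$ outside $B_{n+1}$, this function is $\mu_n$-integrable. Then \cite[Theorem 2.3.2]{book} yields $\int L_{\mu_n,n}W(\cdot,\varphi_n\mu_n)\,d\mu_n\ge0$. On the other hand,
\[
\int L_{\mu_n,n}W(\cdot,\varphi_n\mu_n)\,d\mu_n=\int L_{\varphi_n\mu_n}W(\cdot,\varphi_n\mu_n)\,d(\varphi_n\mu_n)-\Lambda\int(1-\varphi_n)V\,d\mu_n\le C-\Lambda\int V\,d\mu_n
\]
by (H3.1) applied to the compactly supported sub-probability measure $\varphi_n\mu_n$. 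Hence $\int V\,d\mu_n\le C/\Lambda$ for all $n$.

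The integral condition only controls the diagonal quantity $\int L_\nu W(\cdot,\nu)\,d\nu$. So the uniform bound has to be derived at the fixed points $\mu_n$ of the truncated problems, whose existence comes from the pointwise condition with $C_n$. It cannot be derived for linear problems frozen at an arbitrary $\sigma$, as your alternative suggests. From there your compactness and limit-passage argument coincides with the paper's, with $\varphi_n\mu_n$ in place of $\nu_n$.
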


\section{Examples}
\label{sec3}

The following example illustrates assumption ${\rm (H2)}$.

\begin{example}
	Let $p\ge 1$. Denote by $\mathcal{P}_p(\mathbb{R}^d)$ the space of probability measures $\mu$ such that $|x|^p\in L^1(\mu)$.
	Recall that the Kantorovich metric $W_p(\mu, \sigma)$ is defined as the infimum
	$$
	\int_{\mathbb{R}^d\times\mathbb{R}^d}|x-y|^p\pi(dxdy)
	$$
	over all probability measures $\pi$ on $\mathbb{R}^d\times\mathbb{R}^d$ with projections $\mu$ and $\sigma$. Consider the space $\mathcal{P}_p(\mathbb{R}^d)$
	with the metric $W_p$.
	
	If $V(x)=\bigl(1+|x|^2\bigr)^{s/2}$, where $s>p$, and $a^{ij}(x,\mu),\, b^i(x,\mu)$ are continuous Borel functions on $\mathbb{R}^d\times \mathcal{P}_p(\mathbb{R}^d)$ for all $i,j$, then assumption ${\rm (H2)}$ is satisfied.
\end{example}

\begin{proof}
	Let $R > 0$. Let us show that for every ball $B\subset\mathbb{R}^d$ the $V$-weak convergence of a sequence $\mu_n\in\mathcal{P}_{R}(V)$ to a measure $\mu\in\mathcal{P}_{R}(V)$
	implies the equalities
	$$
	\lim_{n\to\infty}\sup_{x\in B}\big|a^{ij}(x, \mu^n)-a^{ij}(x, \mu)\big|=0,\qquad \lim_{n\to\infty}\sup_{x\in B}\big|b^i(x, \mu^n)-b^i(x, \mu)\big|=0
	$$
	for all $i,j$.
	
	Let $\overline{B}$ denote the closure of the ball $B$. The set $\mathcal{P}_{R}(V)$ is a compact set in $\mathcal{P}_p(\mathbb{R}^d)$. Therefore, $\overline{B}\times\mathcal{P}_{R}(V)$ is a compact set and
	the mappings $(x, \mu)\mapsto a^{ij}(x, \mu),\, (x, \mu)\mapsto b^i(x, \mu)$ are uniformly continuous on $\overline{B}\times\mathcal{P}_{R}(V)$ for all $i,j$. It remains to note that the $V$-weak convergence of a sequence $\mu_n\in\mathcal{P}_{R}(V)$ to a measure $\mu\in\mathcal{P}_{R}(V)$
	implies the equality $\lim_{n\to\infty}W_p(\mu_n, \mu)=0$.
\end{proof}

Let us consider the example illustrating assumptions ${\rm (H1)}$ and ${\rm (H3)}$.
\begin{example}
	\label{ex1}
	{\rm Let $V(x)=(1+|x|^2)^{s/2}$, $H(x)=|x|^p$, where $0\le p<s$. Suppose that for every cube $K \subset \mathbb R^d$ there exists a continuous function $\Psi_K$ such that
	
	(i) $\displaystyle{|a^{ij}(x,\mu) - a^{ij}(x',\mu)| + |b^i(x,\mu) - b^i(x',\mu)| \le \Psi_K\left(\int_{\mathbb R^d} |y|^s\mu(dy)\right) |x-x'|.}$\\
	for all $x,x' \in K$ and all probability measures $\mu$ with compact support. Let us also assume that there exist positive numbers $C,C_1,C_2,\delta$ and a continuous function $\beta$ such that $\beta(0) = 0$ and
	
(ii) $\displaystyle{|a^{ij}(x,\mu)| \le C\left(1 +|x|^p + \int_{\mathbb R^d}|y|^p\mu(dy)\right),}\qquad\displaystyle{|b^i(x,\mu)| \le C\left(1 + \beta(x) + \int_{\mathbb R^d}|y|^p\mu(dy)\right),}$

(iii) $\displaystyle{\int_{\mathbb R^d} |y|^{s-2}\langle b(y,\mu),  y\rangle\mu(dy) \le C_1-C_2\int_{\mathbb R^d}|y|^{s+p-1+\delta}\mu(dy)}$\\
for all $i,j$, for all $x,x'$ and all probability measures $\mu$ with compact support. If, in addition, ${\rm (H2)}$ holds, then the assumptions of Corollary \ref{cor1} are satisfied for $m = 0$.
}
\end{example}

\begin{proof}
	Let us first verify that ${\rm (H3.1)}$ is fulfilled. It is enough to verify this for $V(x) = |x|^s$. Using the equality
	$$
	L_{\mu}V(x)=s|x|^{s-2}{\rm trace}\, A(x, \mu)+s(s-2)|x|^{s-4}\langle A(x, \mu)x, x\rangle+
	s|x|^{s-2}\langle b(x, \mu), x\rangle,
	$$
	we obtain the estimate
	\begin{multline*}
		\int_{\mathbb R^d} L_{\mu}V(y)\mu(dy)\le\\ 
		s(s+d-2)C\int_{\mathbb R^d}|y|^{s-2}\left(1 +|y|^p + \int_{\mathbb R^d}|z|^p\mu(dz)\right)\mu(dy) + s\int_{\mathbb R^d}|y|^{s-2}\langle b(y, \mu), y\rangle\mu(dy).
	\end{multline*}
Let $B_1$ denote the ball with center at zero and radius one. The inequality
$$
\int_{\mathbb R^d} |y|^r\mu(dy) = \int_{B_1} |y|^r\mu(dy) + \int_{\mathbb R^d\setminus B_1} |y|^r\mu(dy) \le 1 + \int_{\mathbb R^d} |y|^q\mu(dy).
$$
holds for all $0 \le r \le q$. Then we have
$$
\int_{\mathbb R^d} L_{\mu}V(y)\mu(dy)\le N_1 + N_2 \int_{\mathbb R^d}|y|^{s + p-2}\mu(dy)+ s\int_{\mathbb R^d}|y|^{s-2}\langle b(y, \mu), y\rangle\mu(dy),
$$
where the numbers $N_1, N_2$ do not depend on $\mu$. Let us choose a number $R > 0$ such that $s\,C_2R^\delta = N_2 + 1$. Then we obtain the estimate
\begin{multline*}
	\int_{\mathbb R^d}|y|^{s-2}\langle b(y, \mu), y\rangle\mu(dy) \le 
	C_1 - C_2R^{\delta}\int_{\mathbb R^d\setminus B_R}|y|^{s+p-1}\mu(dy) =\\ 
	C_1 - \frac{N_2+1}{s}\int_{\mathbb R^d\setminus B_R}|y|^{s+p-1}\mu(dy) \le C_1 - \frac{N_2+1}{s}\int_{\mathbb R^d}|y|^{s+p-1}\mu(dy) + C_2R^{s+p-1+\delta}.
\end{multline*}
Therefore,
$$
\int_{\mathbb R^d} L_{\mu}V(y)\mu(dy)\le N - \int_{\mathbb R^d}|y|^{s+p-1}\mu(dy) \le N + 1 - \int_{\mathbb R^d}|y|^{s}\mu(dy),
$$
where $N = N_1 + C_1 + sC_2R^{s+p-1+\delta}$. Thus, ${\rm (H3.1)}$ is satisfied.

Let us verify that the remaining conditions hold true. (H1.1) is fulfilled for $m = 0$. By (i) assumptions (H1.2) and (H1.3) are satisfied. Condition (\ref{cor: cond1}) is valid by (ii).
\end{proof}

\vspace*{0.2cm}

The following example shows that the integral condition may hold even when the pointwise condition fails.

\begin{example}
	\label{ex2}
	{\rm The nonlinear Kolmogorov equation (\ref{eq1}) on $\mathbb R$ with the coefficients
	$$
	a(x,\mu) = x^2\left(\int_\mathbb R |y|\,\mu(dy)\right)^3,\quad b(x,\mu) = -2x^3\left(\int_\mathbb R |y|\,\mu(dy)\right)
	$$	
	satisfies the assumptions of Corollary \ref{cor1}, but the assumptions of Theorem \ref{cor2} are not satisfied for any Lyapunov function $V$.
}
\end{example}
\begin{proof}
	Let $V(x) = x^2/2$. Then the equality
	$$
	L_\mu V = x^2\left(\int_\mathbb R |y|\,\mu(dy)\right)^3 - 2x^4 \left(\int_\mathbb R |y|\,\mu(dy)\right).
	$$
	holds for all $x \in \mathbb R$ and all probability measures $\mu$ with compact support. Integrating the equality, we obtain
	\begin{multline*}
		\int_{\mathbb R}L_\mu Vd\mu = \left(\int_\mathbb R |y|\,\mu(dy)\right)\left[\left(\int_{\mathbb R} y^2\mu(dy)\right)\left(\int_\mathbb R |y|\,\mu(dy)\right)^2 - 2\left(\int_{\mathbb R}y^4 \mu(dy)\right)\right] \le\\ 
		-\int_{\mathbb R}y^4 \mu(dy)\int_\mathbb R |y|\,\mu(dy),
	\end{multline*}
where the last inequality is satisfied by H\"older's inequality. On the one hand,
	$$
		\int_\mathbb R |y|\,\mu(dy)\int_{\mathbb R}y^4 \mu(dy) \ge  \int_{|y|>1}y^4 \mu(dy)\int_{|y|>1}|y|\,\mu(dy).
	$$
On the other hand,
$$
\int_{|y| > 1} y^2 \mu(dy) \le \left(\int_{|y| > 1} y^4 \mu(dy)\right)^{1/2}\mu(|y|>1)^{1/2} \le \left(\int_{|y| > 1} y^4 \mu(dy)\right)^{1/2}\left(\int_{|y| > 1} |y| \mu(dy)\right)^{1/2}.
$$
Then we have
	\begin{multline*}
	\int_{\mathbb R}y^4 \mu(dy)\int_\mathbb R |y|\,\mu(dy) \ge  \left(\int_{|y| > 1} y^2 \mu(dy)\right)^2 = \left(\int_\mathbb R y^2 \mu(dy) - \int_{|y| \le 1} y^2 \mu(dy)\right)^2 \ge\\ \left(\int_\mathbb R y^2 \mu(dy)\right)^2 - 2 \int_\mathbb R y^2 \mu(dy) \int_{|y| \le 1} y^2 \mu(dy) \ge \left(\int_\mathbb R y^2 \mu(dy)\right)^2 - 2 \int_\mathbb R y^2 \mu(dy) \ge \int_\mathbb R y^2 \mu(dy)-3.
\end{multline*}
Thus, (\ref{cor: cond2}) is fulfilled.

We have $a(0,\mu) = b(0,\mu) = 0$. Therefore, we can take zero function as the function $H$ in (\ref{cor: cond1}). 

Let us verify that ${\rm (H1)},\, {\rm (H2)}$ are satisfied. ${\rm (H1.1)}$ is fulfilled for $m = 0$. For the function $V(x) = x^2/2$ the set $\mathcal P_{R}(V)$, introduced in the previous section, is the set of all probability measures $\mu$ on $\mathbb R$, satisfying the condition
$$
\int_\mathbb R y^2 \mu(dy) \le 2R,
$$
then ${\rm (H1.2)}$ holds. Assumption ${\rm (H1.3)}$ is satisfied, since for any every $K \subset \mathbb R$ the inequality
	$$
|a(x,\mu) - a(x',\mu)| + |b(x,\mu) - b(x',\mu)| \le C(K, R)|x - x'|,
$$
holds for all $x, x' \in K$ and all $\mu\in \mathcal P_R(V)$, where
$$
C(K, R) = 16R^3\max_{x\in K}|x| + 16R\max_{x\in K}|x|^2.
$$

In the definition of the $V$-weak convergence we can substitute the function $\zeta(x) = |x|$. Therefore, ${\rm (H2)}$ is also satisfied. 

Let us show that the assumptions of Corollary \ref{cor2} are not fulfilled. Suppose that there exist positive numbers $C, \Lambda$ and a function $V\in C^2(\mathbb R)$ such that $V\ge 0,\,\lim_{|x|\rightarrow\infty}V(x) = +\infty$ and
$$
L_\mu V(x) \le C - \Lambda V(x)
$$
for all $x \in \mathbb R$ and all $\mu \in \mathcal P(V)$. Substituting $\mu = \delta_0$, we obtain
$$
0 \le C - \Lambda V(x).
$$
Letting $|x| \rightarrow \infty$ to infinity, we obtain a contradiction.
\end{proof}

The following example shows that allowing the Lyapunov function to depend on the measure enlarges the class of problems covered by the integral condition.

\begin{example}
	\label{ex3}
	{\rm Let $a(x)$ be a Lipschitz function with compact support. The nonlinear Kolmogorov equation (\ref{eq1}) on $\mathbb R$ with the coefficients
	$$
	a(x,\mu) = a(x),\quad b(x,\mu) = \int_\mathbb R (2y-x)\mu(dy)
	$$
	satisfies the assumptions of Theorem \ref{main_result}, but the assumptions of Corollary \ref{cor1} are not satisfied for any Lyapunov function $V$.}
\end{example}
\begin{proof}
	For every probability measure $\mu$ with compact support we set
	$$
	W(x, \mu) = \frac{1}{2}\int_\mathbb R (x - 2y)^2\mu(dy).
	$$
	We have the equality
		$$
	L_\mu W(x,\mu) = a(x) - \left(\int_\mathbb R (x - 2y)\mu(dy)\right)^2.
	$$
	Integrating the equality, we obtain
		\begin{multline*}
		\int_\mathbb R L_\mu W(x,\mu) \mu(dx) = \int_{\mathbb R} a(x)\mu(dx) - \int_\mathbb R \left(\int_\mathbb R (x - 2y)\mu(dy)\right)\left(\int_\mathbb R (x - 2z)\mu(dz)\right)\mu(dx) =\\
		\int_{\mathbb R} a(y)\mu(dy) - \iiint_{\mathbb R^3}(x - 2y) (x - 2z)\mu(dy)\mu(dz)\mu(dx).
	\end{multline*}
	Note that
	\begin{multline*}
	\iiint_{\mathbb R^3}(x - 2y) (x - 2z)\mu(dy)\mu(dz)\mu(dx) =\\ 
	\iiint_{\mathbb R^3}(x^2 - 2xy - 2xz + 4yz)\mu(dy)\mu(dz)\mu(dx) = 
	\int_\mathbb R x^2\mu(dx)
	\end{multline*}
	Then we get the estimate
	$$
	\int_\mathbb R L_\mu W(x,\mu) \mu(dx) \le \max_{x\in\mathbb R}|a(x)| - \int_\mathbb R x^2\mu(dx)
	$$
	and we can take $V(x) = x^2$ as the function in ${\rm (H3.1)}$.
	
	Let us verify that ${\rm (H1)},\, {\rm (H2)}$ are satisfied. ${\rm (H1.1)}$ is fulfilled for $m = 0$. For the function $V(x) = x^2$ the set $\mathcal P_{R}(V)$, introduced in the previous section, is the set of all probability measures~$\mu$ on $\mathbb R$, satisfying the condition
	$$
	\int_{\mathbb R}y^2\mu(dy) \le 2R,
	$$
	then ${\rm (H1.2)}$ holds. Assumption ${\rm (H1.3)}$ is satisfied, since $a(x)$ is a Lipschitz function and for any cube $K \subset \mathbb R$ the equality
	$$
	|b(x,\mu) - b(x',\mu)| = |x - x'|.
	$$
	holds for all $x, x' \in K$ and all $\mu\in \mathcal P_R(V)$.
	
	In the definition of the $V$-weak convergence we can substitute the function $\zeta(x) = x$. Therefore, ${\rm (H2)}$ is also satisfied. 
	
	Let us show that the assumptions of Corollary \ref{cor1} are not satisfied. Suppose that there exist positive numbers $C, \Lambda$ and a function $V\in C^2(\mathbb R)$ such that $V\ge 0, \lim_{|x|\rightarrow\infty}V(x) = +\infty$ and
	$$
	\int_\mathbb R L_\mu V(x)\mu(dx) \le C - \Lambda \int_\mathbb R V(x) \mu(dx)
	$$
	for any probability measure $\mu$ with compact support. Then we have
	$$
	\int_\mathbb R L_\mu V(x)d\mu = \int_{\mathbb R}a(x)V''(x)\mu(dx) - \iint_{\mathbb R^2} (x - 2y) V'(x)\mu(dy)\mu(dx) \le C - \Lambda \int_\mathbb R V(x) d\mu
	$$
	Substituting $\mu = \delta_x$ into this inequality, we get
	\begin{equation}
		\label{exmp: eq}
		a(x)V''(x) + xV'(x) \le C - \Lambda V(x).
	\end{equation}
	Since $a(x)$ is a function with compact support, there exists a number $r > 0$ such that $a(x) = 0$ for all $|x| > r$. Besides, we can find a number $r' > r$ such that $C - \Lambda V(x) \le 0$ whenever $|x| > r'$. According to (\ref{exmp: eq}), we have $x V'(x) \le 0$ for all $|x| > r'$. But then $V$ does not increase for all $x > r'$. This contradicts the assumption $\lim_{|x|\rightarrow\infty}V(x) = +\infty$.
\end{proof}

\vspace*{0.2cm}

The following example shows that the application of a measure-dependent Lyapunov function also expands the class of problems, to which the integral condition can be applied, in the case of a diffusion coefficient $A$ with non-compact support.

\begin{example}
	\label{ex4}
	{\rm The nonlinear Kolmogorov equation on $\mathbb R$ with the coefficients
	$$
	a(x,\mu) = xI_{\{x\ge 0\}},\quad b(x,\mu) = \int_\mathbb R (2y-x)\mu(dy),
	$$
	where $I_{\{x\ge 0\}}$ is the indicator function of the set $\{x \ge 0\}$, satisfies Theorem \ref{main_result}, but the assumptions of Corollary \ref{main_result} are not satisfied for any Lyapunov function $V$.}
\end{example}

\begin{proof}
	For every probability measure $\mu$ with compact support we set	
	$$
	W(x, \mu) = \frac{1}{2}\int_\mathbb R (x - 2y)^2\mu(dy).
	$$
	We have the equality
	$$
	L_\mu W(x,\mu) = x I_{\{x\ge0\}} - \left(\int_\mathbb R (x - 2y)\mu(dy)\right)^2.
	$$
	Similar to the previous example, we obtain
	$$
		\int_\mathbb R L_\mu V(x,\mu) d\mu = \int_0^\infty x\,\mu(dx) - \int_\mathbb R x^2\mu(dx) \le
		\left(\int_{\mathbb R} x^2\mu(dx)\right)^{1/2} - \int_\mathbb R x^2\mu(dx),
	$$
	Using the inequality $ab\le (a^2 + b^2)/2$, we obtain the estimate
	$$
	\int_\mathbb R L_\mu V(x,\mu) d\mu \le \frac{1}{2} - \frac{1}{2}\int_\mathbb R x^2\mu(dx).
	$$
	Then we can take $V(x) = x^2$ as the function in ${\rm (H3.1)}$. Similar to the previous example, ${\rm (H1)},\, {\rm (H2)}$ are satisfied.
	
		Let us show that the assumptions of Corollary \ref{cor1} are not satisfied. Suppose that there exist positive numbers $C, \Lambda$ and a function $V\in C^2(\mathbb R)$ such that $V\ge 0, \lim_{|x|\rightarrow\infty}V(x) = +\infty$ and
	$$
	\int_\mathbb R L_\mu V(x)\mu(dx) \le C - \Lambda \int_\mathbb R V(x) \mu(dx)
	$$
	for any probability measure $\mu$ with compact support. Then we have
	$$
	\int_\mathbb R L_\mu V(x)d\mu = \int_{\mathbb R}xI_{\{x\ge 0\}}V''(x)\mu(dx) - \iint_{\mathbb R^2} (x - 2y) V'(x)\mu(dy)\mu(dx) \le C - \Lambda \int_\mathbb R V(x) d\mu.
	$$
	Substituting $\mu = \delta_x$ into this inequality, we get
	\begin{equation}
		\label{exmp: eq2}
		x\left(I_{\{x\ge 0\}}V''(x) + V'(x)\right) \le C - \Lambda V(x).
	\end{equation}
	There exists a number $r > 0$ such that $C - \Lambda V(x) \le 0$ for all $|x| > r$. According to (\ref{exmp: eq2}), we have $V''(x) + V'(x) \le 0$ for all $x > r$, that is, $V'' + V'$ does not increase for all $x > r$. Therefore, there exists a number $C$ such that $V'(x) + V(x) \le C$ whenever $|x| > r$. But then $\lim_{x \rightarrow +\infty}V'(x) = -\infty$. This contradicts the assumption $\lim_{|x|\rightarrow\infty}V(x) = +\infty$. 
\end{proof}

\section{Auxiliary results}
\label{sec4}
This section is devoted to the assertions, playing the crucial role in the proof of Theorem~\ref{main_result}. Further we write $L^*_\sigma \mu = 0$ in the case when $\mu$ is a probability solution to equation (\ref{eq1}) with the measure $\sigma$ in the coefficients. The following result is a generalization of \cite[Lemma~1]{ShSh24}. 

As in Section \ref{sec2} we assume that the upper left $m\times m$ corner of the matrix $A(x,\mu)$ is non-degenerate for all probability measures $\mu$ and $x \in \mathbb R^d$. Let us split the coordinates of variables into two parts $x = (y, z)$, where $y = (x_1, \ldots, x_m)$ and $z = (x_{m+1}, \ldots, x_d)$.
\begin{lemma}
	\label{lemma: regular}
	Assume that conditions ${\rm (H1)}, {\rm (H2)}$ are fulfilled. Let $R > 0$, $\sigma \in \mathcal P_R(V)$ and $\mu$ be a probability solution to the equation $L^*_\sigma \mu = 0$. \\
	(i) Then for every function $\eta\in C_0^{\infty}(\mathbb{R}^{d-m})$, $0\le\eta\le 1$, the projection of the measure
	$\eta\mu$ onto~$\mathbb{R}^m_y$ has a density $\varrho$ with respect to the Lebesgue measure, and for every cube $K_y \subset \mathbb{R}^m_y$ the density $\varrho$ belongs to $L^{r}(K_y)$, where $r=m'$ for $m>1$ and $r$ is an arbitrary number greater than one for $m=1$. \\
	(ii) In addition, if an open cube $Q_y$ contains the closure of the cube $K_y$, an open cube $Q_z$ contains the support of $\eta$ and $Q = Q_y\times Q_z$, then we have the estimate
	$$\|\varrho\|_{L^{r}(K_y)}\le C,$$
	where the number $C$ does not depend on $\mu,\sigma$ and depends only on $R$, $K_y$, $Q$, $\eta$, $\lambda_{Q,R}$, $m$, $d$, $\omega_{Q,R}$ and 
	$C_{Q,R} = \sup_{x\in Q,\, \nu \in \mathcal P_{R}(V)}\left(|a^{ij}(x, \nu)| + |b^i(x, \nu)|\right).
	$
\end{lemma}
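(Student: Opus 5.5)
Since $\sigma$ is fixed, $\mu$ solves a \emph{linear} stationary Kolmogorov equation whose coefficients $a^{ij}(x)=a^{ij}(x,\sigma)$, $b^i(x)=b^i(x,\sigma)$ satisfy, on $Q$, uniform bounds that depend only on $R$. Indeed, since $\sigma\in\mathcal P_R(V)$, (H1.1) gives ellipticity with constant $\lambda_{Q,R}$ in the $y$-block, (H1.2) gives the bound $C_{Q,R}$, and (H1.3) gives the modulus $\omega_{Q,R}$ in $z$. The plan is therefore to repeat the argument of \cite[Lemma~1]{ShSh24} for this linear equation, keeping track of which constants enter. Nothing in the argument should use (H2). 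The first step is to derive an equation for the projection. Take test functions of the form $u(y,z)=\varphi(y)\eta(z)$ with $\varphi\in C_0^\infty(Q_y)$. Then the identity $\int L_\sigma u\,d\mu=0$ rewrites as
$$
\int \sum_{i,j\le m} \bar a^{ij}(y)\partial_{y_i}\partial_{y_j}\varphi\,\varrho\,dy = -\int \Bigl(\sum_{i\le m}\bar b^i\partial_{y_i}\varphi\Bigr)\varrho\,dy - \int \varphi\, \theta\,\mu(dx)-\int \sum_{i\le m}\partial_{y_i}\varphi\,\kappa^i\,\mu(dx),
$$
where $\bar a^{ij},\bar b^i$ are the $\eta\mu$-conditional averages over $z$ of the coefficients. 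The terms $\theta,\kappa^i$ collect the $z$-derivatives of $\eta$ and the mixed entries $a^{ij}$ with $i\le m<j$; they are bounded by constants depending on $C_{Q,R}$ and $\eta$. However, they are integrated against $\mu$ restricted to $\operatorname{supp}\nabla\eta$, not against $\eta\mu$. So the projection $\nu$ of $\eta\mu$ onto $\mathbb R^m_y$ satisfies, in the sense of distributions on $Q_y$, an equation
$$
\partial_{y_i}\partial_{y_j}(\bar a^{ij}\nu)-\partial_{y_i}(\beta^i)+\gamma=0
$$
with bounded measures $\beta^i,\gamma$ of total mass controlled by $C_{Q,R}$, $\eta$, and $\mu(Q)\le 1$. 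The matrix $(\bar a^{ij})_{i,j\le m}$ is an average of matrices uniformly elliptic with constant $\lambda_{Q,R}$, so it is itself elliptic with that constant and bounded by $C_{Q,R}$.

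The second step is the regularity statement for such equations with merely measurable, uniformly elliptic, bounded $\bar a$. For $m\ge2$, the Bogachev--Krylov--R\"ockner type result in \cite{book} (Theorem 1.6.8 / Corollary 1.6.x) gives that $\nu$ has a density in $L^{m'}_{loc}$. For $m=1$, one integrates once: $(\bar a\nu)'$ is a bounded measure, so $\bar a\nu$ is a BV function and $\nu$ has a bounded density; in particular $\nu\in L^r$ for every $r>1$. Part (ii) follows by inspecting these theorems. The $L^{r}(K_y)$ norm is bounded by a constant depending on the ellipticity constant, the sup-bounds, $K_y$, $Q_y$, and the total variation of the right-hand side. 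All of these depend only on $R$, $Q$, $\eta$, $\lambda_{Q,R}$, $C_{Q,R}$, $m$, $d$. It remains to check how $\omega_{Q,R}$ enters.

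The main obstacle is the $m\ge2$ case. The $L^{m'}$ estimate for merely measurable $\bar a$ fails in general: it requires either continuity (or VMO) of the averaged coefficient in $y$, or the special structure. The averaged $\bar a$ depends on $y$ through $\mu$ and need not be continuous. I would try to handle this as in \cite{ShSh24}, which is where $\omega_{Q,R}$ enters. Localize in $z$ by partitioning $Q_z$ into small cubes of diameter $h$ with a smooth partition of unity $\eta_k$. Freeze the coefficient at a point $z_k$: on each piece, $a^{ij}(y,z,\sigma)=a^{ij}(y,z_k,\sigma)+O(\omega_{Q,R}(h))$. The error term is then moved to the right-hand side as $\partial_{y_i}\partial_{y_j}$ of a measure with small density relative to $\nu$. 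One still needs the frozen coefficient $a^{ij}(y,z_k,\sigma)$ to be admissible for the linear theory. If this fails, I would instead use the duality approach: solve the adjoint Dirichlet problem with the frozen coefficients in $W^{2,p}$, $p>m$ close to $m$, so that the test function has bounded gradient. Then absorb the $\omega(h)$-perturbation with a closing estimate. The resulting bound depends on $\omega_{Q,R}$ through the choice of $h$, which matches the list of constants in (ii).
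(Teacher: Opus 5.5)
Your first paragraph is exactly the paper's proof. With $\sigma\in\mathcal P_R(V)$ fixed, the functions $a^{ij}(\cdot,\sigma)$, $b^i(\cdot,\sigma)$ satisfy (H1) on $Q$ with constants $\lambda_{Q,R}$, $C_{Q,R}$, $\omega_{Q,R}$ that do not depend on $\sigma$. The paper then simply invokes \cite[Lemma 1]{ShSh24} for this linear equation; (H2) is indeed never used. Your Steps 1--2 correctly reconstruct what that lemma does. Compare the proof of Remark~\ref{rm: reg_with_poten}: it uses the same test functions $\psi(y)\eta(z)$, the same bound $N(\sup|\psi|+\sup|\nabla\psi|)$, and a $y$-block that is elliptic with constant $\lambda_{Q,R}$.

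Your final paragraph, however, rests on a false premise and should be deleted. The Bogachev--Krylov--R\"ockner $L^{m/(m-1)}_{loc}$ bound for nonnegative $\nu$ needs no continuity or VMO of $\bar a$. Here is why:
\begin{itemize}
\item Test the inequality $|\int \bar a^{ij}\partial_{y_i}\partial_{y_j}\psi\,d\nu|\le N(\sup|\psi|+\sup|\nabla\psi|)$ with $\zeta u$, where $u$ is a convex solution of $\det D^2u=\psi^m$ in a ball, with zero boundary values.
\item For any measurable nonnegative symmetric $\bar a$ one has pointwise $\operatorname{tr}(\bar a D^2u)\ge m(\det\bar a)^{1/m}\psi\ge m\lambda_{Q,R}\psi$.
\item Alexandrov's estimate gives $\sup|u|\le C\|\psi\|_{L^m}$, and convexity controls $|\nabla u|$ on $\operatorname{supp}\zeta$.
\end{itemize}
Hence $\int\zeta\psi\,d\nu\le C\|\psi\|_{L^m}$, with $C$ depending only on $N$, $\lambda_{Q,R}$, $C_{Q,R}$, $m$ and the cubes. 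Regularity of the coefficients is relevant only for higher integrability, which the lemma does not claim.

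The repair you sketch would not help anyway, for two reasons. Freezing $z$ leaves $a^{ij}(y,z_k,\sigma)$ merely measurable in $y$, which is precisely the paper's setting. And $W^{2,p}$ solvability with $p>m$, which you would need for a bounded gradient, is not available for measurable coefficients in general.

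Likewise, you need not look for where $\omega_{Q,R}$ enters. The projection bound uses only $\lambda_{Q,R}$, $C_{Q,R}$, $\eta$ and $\mu(Q)\le 1$. The appearance of $\omega_{Q,R}$ in the ``depends only on'' list is superfluous but harmless.
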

\begin{proof}
	Let $\mu$ be a probability solution to the equation $L^*_\sigma \mu = 0$. We introduce the functions
	$$
	\alpha^{ij}(x) = a^{ij}(x,\sigma),\quad \beta^i(x) = b^i(x,\sigma),
	$$
	which are defined for all $x \in \mathbb R^d$. Then the measure $\mu$ is a solution to the equation 
	$$
	\sum_{i,j = 1}^d \partial_{x_i}\partial_{x_j}\bigl(\alpha^{ij}(x)\mu\bigr)-\sum_{i=1}^d\partial_{x_i}\bigl(\beta^i(x)\mu\bigr)=0.
	$$
	By ${\rm (H1)}$ for all $x\in Q$, $\xi\in\mathbb R^m$ and all $i,j$ we have
	$$
	\sum_{i, j\le m}\alpha^{ij}(x)\xi_i\xi_j\ge \lambda_{Q,R}|\xi|^2,\quad |\alpha ^{ij}(x)| + |\beta^i(x)| \le C_{Q,R},
	$$
	and for all $y \in Q_y, z,z'\in Q_z$ we have
	$$
	|\alpha^{ij}(y, z)-\alpha^{ij}(y, z')|+|\beta^i(y, z)-\beta^i(y, z')|\le \omega_{K,R}(|z-z'|).
	$$
	Therefore, the statement of the lemma follows from \cite[Lemma 1]{ShSh24}.
\end{proof}

\vspace*{0.2cm}

\begin{remark}
	\label{rm: reg_with_poten}
	If we raplace the operator $L_\sigma$ by $L_\sigma + f(x)$, where $f$ is a continuous function, the statement of Lemma \ref{lemma: regular} remains true.
\end{remark}
\begin{proof}
	 We say that a probability measure $\mu$ is a solution to the stationary Kolmogorov equation with the operator $L_\sigma + f(x)$ if the identity
	 \begin{equation}\label{integr2}
	 	\int_{\mathbb{R}^d}L_\sigma u(x)\mu(dx) + \int_{\mathbb R^d} f(x)u(x)\mu(dx)=0.
	 \end{equation}
	 holds for every function $u\in C_0^{\infty}(\mathbb{R}^d)$. We substitute the function 
	 $$u(y, z)=\psi(y)\eta(z)$$ 
	 into identity (\ref{integr2}), where $\psi\in C_0^{\infty}(Q_y)$. 	 
	 Then by ${\rm (H1)}$ we obtain the estimate
	\begin{multline*}
	\int_{\mathbb R^d}\Biggl(\sum_{i, j\le m}a^{ij}(y, z, \sigma)\partial_{y_i}\partial_{y_j}\psi(y)\Biggr)\eta(z)\mu(dydz)\le\\
	N\Bigl(\sup_y|\psi(y)|+\sup_y|\nabla\psi(y)|\Bigr) - \int_{\mathbb R^d}f(y,z)\psi(y)\eta(z)\mu(dydz),
	\end{multline*}
	where $N$ does not depend on $\mu$ and $\sigma$, since $C_{Q,R} = \sup_{x\in Q,\, \nu \in \mathcal P_{R}(V)}\left(|a^{ij}(x, \nu)| + |b^i(x, \nu)|\right)$ does not depend on $\mu$ and $\sigma$. Note that
	$$
	\int_{\mathbb R^d}f(y,z)\psi(y)\eta(z)\mu(dydz) \le C(f)\sup_y|\psi(y)|,
	$$
	where $C(f) = \sup_{x \in Q}|f(x)|\sup_z|\eta(z)|$. Denoting $N' = N + C(f)$, we get the inequality
	$$
		\int_{\mathbb R^d}\Biggl(\sum_{i, j\le m}a^{ij}(y, z, \sigma)\partial_{y_i}\partial_{y_j}\psi(y)\Biggr)\eta(z)\mu(dydz)\le\\
		N'\Bigl(\sup_y|\psi(y)|+\sup_y|\nabla\psi(y)|\Bigr).
	$$
	Since the $\lambda_{Q,R}$ does not depend on $\mu$ and $\sigma$, we can repeat the arguments of \cite[Lemma 1]{ShSh24} and obtain the required estimate.
\end{proof}

\vspace*{0.2cm}

Let us consider an arbitrary cube $K = K_y \times K_z \subset \mathbb R^d$ and a function $\eta\in C_0^{\infty}(\mathbb{R}^{d-m})$ such that $0\le\eta\le 1$ and $\eta = 1$ on $K_z$. We denote by $\mathcal M_{K,S}(\eta)$ the set of all probability measures~$\mu$ on~$\mathbb R^d$ such that the projection of the measure
$\eta\mu_n$ onto $\mathbb{R}^m_y$ has a density $\varrho$ with respect to the Lebesgue measure and
$$
\|\varrho\|_{L^{r}(K_y)}\le S,
$$
where $r=m'$ for $m>1$ and $r$ is an arbitrary number greater than one for $m=1$.

\begin{lemma}
	\label{lemma: converg}
	Assume that conditions ${\rm (H1)}, {\rm (H2)}$ are fulfilled. Then for all $i,j$ the weak convergence of a sequence $\sigma_n\in \mathcal P_R(V)$ to a measure $\sigma\in \mathcal P_R(V)$ implies the equality
	$$
	\lim_{n \rightarrow \infty}\sup_{\mu \in \mathcal M_{K,S}(\eta)}\int_K\Bigl(|a^{ij}(x, \sigma_n) - a^{ij}(x, \sigma)| + |b^i(x, \sigma_n) - b^i(x, \sigma)|\Bigr)d\mu = 0.
	$$
\end{lemma}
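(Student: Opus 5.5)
The plan is to reduce the integral over $K$ against $\mu$ to an integral in $y$ against the density $\varrho$, and then use Hölder's inequality together with (H2). Write $g_n(x)=|a^{ij}(x,\sigma_n)-a^{ij}(x,\sigma)|+|b^i(x,\sigma_n)-b^i(x,\sigma)|$. Weak convergence in $\mathcal P_R(V)$ implies $V$-weak convergence, by the proposition from \cite[Section 5]{BS24} quoted in Section~\ref{sec2}. By (H1.2), $0\le g_n\le 2C_{K,R}$ on $K$, uniformly in $n$. Since $\eta=1$ on $K_z$, we have $\int_K g_n\,d\mu\le\int g_n I_K\,\eta\,d\mu$. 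The difficulty is that $g_n$ depends on $z$, whereas only the $y$-projection of $\eta\mu$ has a density. So the first step is to freeze $z$, using the uniform modulus of continuity (H1.3).

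Fix $\varepsilon>0$. Using $\omega_{K,R}$, I would choose a finite partition of $K_z$ into small cubes $K_z^1,\dots,K_z^N$ with centers $z_1,\dots,z_N$ and diameters $\delta$ such that $\omega_{K,R}(\delta)<\varepsilon$. For $z\in K_z^l$, (H1.3) gives, uniformly in $y\in K_y$ and in $n$,
\[
g_n(y,z)\le g_n(y,z_l)+4\omega_{K,R}(\delta)\le g_n(y,z_l)+4\varepsilon ,
\]
since (H1.3) is applied to both $\sigma_n$ and $\sigma$, all of which lie in $\mathcal P_R(V)$. This yields
\[
\int_K g_n\,d\mu\le \sum_{l=1}^N\int_{K_y}g_n(y,z_l)\,\varrho(y)\,dy+4\varepsilon ,
\]
where we have bounded $I_{K_z^l}(z)\le\eta(z)$ and summed. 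Strictly, each term over $K_z^l$ is at most $\int_{K_y}g_n(y,z_l)\varrho\,dy$, and there are $N$ such terms, with $N$ depending only on $\varepsilon$. We also used $\mu(K)\le1$.

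Next I apply Hölder's inequality on $K_y$ with exponents $r$ and $r'$, where $r=m'$ gives $r'=m$ for $m\ge2$. For $m=1$, I choose $r$ so that $r'=1+\gamma$, which is admissible since $r>1$ is arbitrary; this $r$ should be the one fixed in the definition of $\mathcal M_{K,S}(\eta)$. Then
\[
\int_{K_y}g_n(y,z_l)\varrho\,dy\le S\Bigl(\int_{K_y}g_n(y,z_l)^{r'}dy\Bigr)^{1/r'} .
\]
The key point is that the right-hand side no longer depends on $\mu$. By (H2) in the cases $m\ge2$ and $m=1$ respectively, using $(s+t)^{p}\le 2^{p-1}(s^p+t^p)$, it tends to zero for each fixed $z_l$. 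Since there are finitely many $z_l$, we get $\limsup_n\sup_{\mu}\int_K g_n\,d\mu\le4\varepsilon$, and then we let $\varepsilon\to0$.

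The case $m=0$ has no $y$-variable and the density condition is vacuous, so it must be treated directly. Here $K=K_z$, and (H1.3) gives equicontinuity of $g_n$ on $K$. Pointwise convergence from (H2, $m=0$) on a finite $\delta$-net, combined with equicontinuity, gives $\sup_K g_n\to0$, and hence the same bound uniformly over all probability measures $\mu$. The main obstacle is the first step, the uniform-in-$\mu$ reduction from $z$-dependent integrands to finitely many frozen slices. It works only because (H1.3) holds uniformly over $\mathcal P_R(V)$, so that it applies simultaneously to all $\sigma_n$, and because the bound from Lemma~\ref{lemma: regular} is built into $\mathcal M_{K,S}(\eta)$. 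One also needs to check that the exponent $r$ in $\mathcal M_{K,S}(\eta)$ is compatible with $\gamma$ when $m=1$, which I would fix by the choice $r=(1+\gamma)/\gamma$.
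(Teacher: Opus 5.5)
Your proof is correct and follows the paper's argument essentially step for step. Like the paper, you freeze $z$ on a finite partition of $K_z$ using the uniform modulus $\omega_{K,R}$ from (H1.3), dominate the indicator of each slab by $\eta$ to pass to the density $\varrho$, apply H\"older with $r'=m$ (or $r'=1+\gamma$ when $m=1$), and invoke (H2) at the finitely many frozen points, with $m=0$ handled by pointwise convergence at those points. Your worry about matching $r$ with $\gamma$ for $m=1$ is settled by the same choice the paper makes, and it is harmless anyway: the uniform bound $g_n\le 2C_{K,R}$ from (H1.2) makes $L^{1+\gamma}(K_y)$-convergence equivalent to $L^{p}(K_y)$-convergence for every finite $p$.
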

\begin{proof}
	Fix $\varepsilon > 0$. Let $\mu \in \mathcal M_{K,S}(\eta)$. We cut the cube $K_z$ into $N$ pairwise disjoint Borel sets~$\Delta_k$ such that for every $k$ the inequality $\omega_{K,R}(|z-z'|)\le\varepsilon$ holds for all $z, z'\in \Delta_k$. In each $\Delta_k$ we choose an arbitrary point~$\xi_k$. For all $i,j$ we have the estimate
	\begin{multline}
		\label{estim_1}
		\int_{K_y\times K_z}|a^{ij}(y,z,\sigma) - a^{ij}(y,z,\sigma_n)|\,\mu(dydz) = \sum_{k = 1}^{N} \int_{K_y\times \Delta_k}|a^{ij}(y,z,\sigma) - a^{ij}(y,z,\sigma_n)|\,\mu(dydz) \le\\
		\sum_{k = 1}^{N} \int_{K_y\times \Delta_k}|a^{ij}(y,z,\sigma) - a^{ij}(y,\xi_k,\sigma)|\,\mu(dydz) + \sum_{k = 1}^{N} \int_{K_y\times \Delta_k}|a^{ij}(y,\xi_k,\sigma_n) - a^{ij}(y,z,\sigma_n)|\,\mu(dydz) +\\
		\sum_{k = 1}^{N} \int_{K_y\times \Delta_k}|a^{ij}(y,\xi_k,\sigma) - a^{ij}(y,\xi_k,\sigma_n)|\,\mu(dydz).
	\end{multline}
By ${\rm (H1.3)}$ the inequality 
	$$|a^{ij}(y,z,\sigma) - a^{ij}(y,\xi_k,\sigma)| + |a^{ij}(y,z,\sigma_n) - a^{ij}(y,\xi_k,\sigma_n)| < 2\varepsilon.$$
holds for all $z \in \Delta_k$, $y \in K_y$. Therefore, the first and second sums in inequality (\ref{estim_1}) are less than $\varepsilon$. Let us estimate the last sum in inequality (\ref{estim_1}). To do this, we consider two cases: $m = 0$ and $m > 0$. 

If $m = 0$, then there are no variables $y$ and the last sum in inequality (\ref{estim_1}) has the form
$$
\sum_{k = 1}^{N} \int_{K_y\times \Delta_k}|a^{ij}(\xi_k,\sigma) - a^{ij}(\xi_k,\sigma_n)|\,\mu(dydz) = \sum_{k = 1}^{N} |a^{ij}(\xi_k,\sigma) - a^{ij}(\xi_k,\sigma_n)|\,\mu(K_y\times \Delta_k).
$$
By ${\rm (H2)}$ there exists a number $N'$ such that
$$
|a^{ij}(\xi_k,\sigma) - a^{ij}_\delta(\xi_k,\sigma_n)| < \varepsilon
$$
for all $n \ge N'$ and for all $k \in \{1,\ldots, N\}$. Hence the last sum in inequality (\ref{estim_1}) is less than~$\varepsilon$.

Let us now consider the case $m > 0$. Let $\eta \in C_0^\infty(\mathbb R^{d-m})$, $0 \le \eta \le 1$ and $\eta(z) = 1$ for all $z \in K_z$. Then the inequality
$$
\int_{K_y\times \Delta_k}|a^{ij}(y,\xi_k,\sigma) - a^{ij}(y,\xi_k,\sigma_n)|\,\mu(dydz) \le \int_{K_y\times \mathbb R_z^{d-m}}|a^{ij}(y,\xi_k,\sigma) - a^{ij}(y,\xi_k,\sigma_n)|\,\eta(z)\mu(dydz)
$$
holds for all $k\in \{1,\ldots, N\}$. Since $\mu \in \mathcal M_{K,S}(\eta)$, the projection of the measure $\eta\mu$ onto $\mathbb{R}^m_y$ has a density $\varrho$ and $\|\varrho\|_{L^{r}(K_y)} \le S$. By H\"older's inequality we have
\begin{multline*}
	\int_{K_y}|a^{ij}(y,\xi_k,\sigma) - a^{ij}(y,\xi_k,\sigma_n)|\rho(y)dy \le
	S\left(\int_{K_y}|a^{ij}(y,\xi_k,\sigma) - a^{ij}(y,\xi_k,\sigma_n)|^{r'}dy\right)^{1/r'}.
\end{multline*}
Here $r' = m$ for $m > 1$ and $r'$ is an arbitrary number greater than one for $m=1$. In the case $m = 1$ we put $r' = 1 + \gamma$, where $\gamma$ is the number from assumption ${\rm (H2)}$. By ${\rm (H2)}$ there exists a number $N'$ such that
$$
\left(\int_{K_y}|a^{ij}(y,\xi_k,\sigma) - a^{ij}(y,\xi_k,\sigma_n)|^{r'}dy\right)^{1/r'} < \frac{\varepsilon}{SN}
$$
for all $n \ge N'$ and for all $k \in \{1,\ldots, N\}$. Therefore, the last sum in inequality (\ref{estim_1}) is less than~$\varepsilon$.

Thus, we arrive to the estimate
$$
\int_{K_y\times K_z}|a^{ij}(y,z,\sigma) - a^{ij}(y,z,\sigma_n)|\,\mu(dydz) < 3\varepsilon.
$$
for all $n \ge N'$. The required estimate for the coefficients $b^i$ is obtained in the same way. This completes the proof.
\end{proof}

\vspace*{0.2cm}

In the proof of Theorem \ref{main_result} we will use the following procedure for approximating the coefficients by continuous functions. 

Let $\delta \in (0,1)$, $y \in \mathbb R^m_y$ and $h_\delta(y) = \delta^{-m}h(y/\delta)$, where $h$ is a nonnegative function such that the support of $h$ is in $\{y: |y| < 1\}$ and $\|h\|_{L^1(\mathbb R^m)} = 1$. We introduce the functions
$$
	a^{ij}_{\delta}(y, z, \sigma)=\int_{\mathbb R^m_y} h_{\delta}(y-v)a^{ij}(v, z, \sigma)\,dv, \quad b^{i}_{\delta}(y, z, \sigma)=\int_{\mathbb R^m_y} h_{\delta}(y-v)b^{i}(v, z, \sigma)\,dv.
$$

The following lemma on approximation holds.
\begin{lemma}
	\label{lemma: approx}
	Assume that conditions ${\rm (H1)}, {\rm (H2)}$ are fulfilled. Let the cube $Q=Q_y\times Q_z$ contain the closure of the cube $K$ and the distance
	from the boundary of the cube $Q_y$ to the boundary of the cube $K_y$ be greater than one. Then we have the equality
$$
\lim_{\delta \rightarrow 0}\sup_{\mu \in \mathcal M_{Q,S}(\eta),\,\sigma \in \mathcal P_R(V)}\int_K\Bigl(|a^{ij}(x, \sigma) - a^{ij}_\delta(x, \sigma)| + |b^i(x, \sigma) - b^i_\delta(x, \sigma)|\Bigr)d\mu = 0.
$$	
\end{lemma}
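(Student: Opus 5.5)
The plan mirrors the proof of Lemma \ref{lemma: converg}: discretize in $z$ using (H1.3), then use the density bound in $y$ from $\mathcal M_{Q,S}(\eta)$ together with Hölder's inequality. The one difference is that the convergence in the $y$-integral now comes from mollification rather than from (H2). We treat $a^{ij}$; the argument for $b^i$ is identical. When $m=0$ there is no $y$-variable and $a^{ij}_\delta=a^{ij}$, so the statement is trivial; assume $m\ge1$.

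Fix $\varepsilon>0$ and split $Q_z$, hence $K_z$, into finitely many Borel pieces $\Delta_k$ ($k=1,\dots,N$) with points $\xi_k\in\Delta_k$, chosen so that $\omega_{Q,R}(|z-z'|)\le\varepsilon$ on each piece. For $y\in K_y$ and $\delta<1$, mollification only uses values $v\in Q_y$, because $|y-v|<\delta<1$ and $\operatorname{dist}(\partial K_y,\partial Q_y)>1$. Hence (H1.3) on $Q$ gives, for $z\in\Delta_k$,
\[
|a^{ij}_\delta(y,z,\sigma)-a^{ij}_\delta(y,\xi_k,\sigma)|\le\varepsilon,\qquad |a^{ij}(y,z,\sigma)-a^{ij}(y,\xi_k,\sigma)|\le\varepsilon,
\]
uniformly in $\sigma\in\mathcal P_R(V)$, because $h_\delta$ has unit mass. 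Therefore
\[
\int_K|a^{ij}-a^{ij}_\delta|\,d\mu\le 2\varepsilon+\sum_{k}\int_{K_y}|a^{ij}(y,\xi_k,\sigma)-a^{ij}_\delta(y,\xi_k,\sigma)|\,\varrho(y)\,dy,
\]
where $\varrho$ is the density of the projection of $\eta\mu$, and $\eta=1$ on $K_z$ (as in the definition of $\mathcal M$). Hölder's inequality with $\|\varrho\|_{L^r}\le S$ bounds each summand by $S\|a^{ij}(\cdot,\xi_k,\sigma)-a^{ij}_\delta(\cdot,\xi_k,\sigma)\|_{L^{r'}(K_y)}$. This removes $\mu$ entirely.

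It remains to show that for each fixed $z=\xi_k$,
\[
\sup_{\sigma\in\mathcal P_R(V)}\|f_\sigma-f_\sigma*h_\delta\|_{L^{r'}(K_y)}\to0,\qquad f_\sigma=a^{ij}(\cdot,\xi_k,\sigma)\mathbf 1_{Q_y}.
\]
This uniformity in $\sigma$ is the main obstacle. The pointwise statement holds for each $\sigma$ since $f_\sigma$ is bounded by $C_{Q,R}$ from (H1.2). For uniformity I would use compactness. $\mathcal P_R(V)$ is weakly compact, weak convergence inside $\mathcal P_R(V)$ is $V$-weak convergence, and by (H2) the map $\sigma\mapsto f_\sigma$ is continuous into $L^{r'}(Q_y)$. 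For $m=1$ this is immediate from (H2). For $m\ge2$, (H2) gives continuity into $L^m$, and together with the uniform bound $C_{Q,R}$ this yields continuity into every $L^{p}$. Hence $\{f_\sigma\}$ is a compact subset of $L^{r'}(Q_y)$.

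The family of operators $T_\delta f=f-f*h_\delta$ is uniformly bounded (by $2$) and tends to zero strongly. On a compact set strong convergence is uniform: cover the set by finitely many $\varepsilon/(SN)$-balls and use equicontinuity of $T_\delta$. Choosing $\delta$ small makes the sum at most $\varepsilon$, so the total is at most $3\varepsilon$ uniformly in $\mu$ and $\sigma$, which completes the proof.
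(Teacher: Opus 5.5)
Your proposal is correct and follows essentially the same route as the paper. Both discretize $K_z$ via (H1.3), use $\eta=1$ together with the $L^r$ density bound and H\"older's inequality to eliminate $\mu$, and then obtain uniformity in $\sigma$ from compactness of $\mathcal P_R(V)$ and the continuity supplied by (H2); your ``uniformly bounded operators $T_\delta$ converge uniformly on compact sets'' step is exactly the paper's finite net $\sigma_1,\dots,\sigma_M$ combined with the $L^{r'}$-contractivity of mollification. Your write-up is slightly more careful than the paper's: you use $\omega_{Q,R}$ and $L^{r'}(Q_y)$ because the mollifier reaches outside $K_y$, and you take $\varepsilon/(SN)$ to account for summing over the $N$ pieces.
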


\begin{proof}
	Fix $\varepsilon > 0$. Let $\mu \in \mathcal M_{Q,S}(\eta)$ and $\sigma \in \mathcal P_R(V)$. We cut the cube $K_z$ into $N$ pairwise disjoint Borel sets $\Delta_k$ such that for every $k$ the inequality $\omega_{K,R}(|z-z'|)\le\varepsilon$ holds for all $z, z'\in \Delta_k$. In each set $\Delta_k$ we choose an arbitrary point~$\xi_k$. Then for all $i,j$ we have the estimate
	\begin{multline*}
		\int_{K_y\times K_z}|a^{ij}(y,z, \sigma) - a^{ij}_\delta(y,z, \sigma)|\,\mu(dydz) = \sum_{k = 1}^N \int_{K_y\times \Delta_k}|a^{ij}(y,z, \sigma) - a^{ij}_\delta(y,z, \sigma)|\,\mu(dydz) \le\\ 
		2\varepsilon + \sum_{k = 1}^N \int_{K_y\times \Delta_k}|a^{ij}(y,\xi_k, \sigma) - a^{ij}_\delta(y,\xi_k, \sigma)|\mu(dydz)
	\end{multline*}
	If $m = 0$, then there are no variables $y$ and the last sum has the form
	$$
	\sum_{k = 1}^{N} \int_{K_y\times \Delta_k}|a^{ij}(\xi_k,\sigma) - a^{ij}_\delta(\xi_k,\sigma)|\,\mu(dydz) = \sum_{k = 1}^{N} |a^{ij}(\xi_k,\sigma) - a^{ij}_\delta(\xi_k,\sigma)|\,\mu(K_y\times \Delta_k),
	$$
	which is less than $\varepsilon$ for all $k \in \{1,\ldots,N\}$ and all $\sigma \in \mathcal P_R(V)$. Then the right-hand side of the previous inequality is less than $3\varepsilon$.
	
	Now we consider the case $m > 0$. Let $\eta \in C_0^\infty(\mathbb R^{d-m})$, $0 \le \eta \le 1$ and $\eta(z) = 1$ for all $z \in Q_z$. For all $i,j$ we have
	\begin{multline*}
		\int_{K_y\times K_z}|a^{ij}(y,z, \sigma) - a^{ij}_\delta(y,z, \sigma)|\,\mu(dydz) = \sum_{k = 1}^N \int_{K_y\times \Delta_k}|a^{ij}(x, \sigma) - a^{ij}_\delta(x, \sigma)|\,\mu(dydz) \le\\ 
		2\varepsilon + \sum_{k = 1}^N \int_{K_y\times \mathbb R^{d-m}_z}|a^{ij}(y,\xi_k, \sigma) - a^{ij}_\delta(y,\xi_k, \sigma)|\eta(z)\mu(dydz)
	\end{multline*}
Since $\mu \in \mathcal M_{Q,S}(\eta)$, the projection of the measure $\eta\mu_n$ onto $\mathbb{R}^m_y$ has a density $\varrho$ and we have the esimate $\|\varrho\|_{L^{r}(K_y)} \le~S$. Using Hölder's inequality, we obtain
$$
\int_{K_y}|a^{ij}(y,\xi_k, \sigma) - a^{ij}_\delta(y,\xi_k, \sigma)|\rho(y)dy \le S\left(\int_{K_y}|a^{ij}(y,\xi_k, \sigma) - a^{ij}_\delta(y,\xi_k, \sigma)|^{r'}\,dy\right)^{1/r'}.
$$
Here $r' = m$ for $m > 1$ and $r'$ is an arbitrary number greater than one for $m=1$. In the case $m = 1$ we put $r' = 1 + \gamma$, where $\gamma$ is the number from assumption ${\rm (H2)}$.
By ${\rm (H2)}$ the mapping 
$$
\sigma \longmapsto\int_{K_y}|a^{ij}(y,\xi_k,\mu) - a^{ij}(y, \xi_k, \sigma)|^{r'}dy
$$
is continuous on the compact $\mathcal P_R(V)$ for all $\mu \in \mathcal P_R(V)$ and $k \in \{1,\ldots, N\}$. Therefore, we can find $\sigma_1,\ldots, \sigma_M \in \mathcal P_R(V)$ such that 
$$
\inf_{n \in \{1,\ldots, M\}}\left(\int_{K_y}|a^{ij}(y,\xi_k,\sigma) - a^{ij}(y, \xi_k, \sigma_n)|^{r'}dy\right)^{1/r'} < \frac{\varepsilon}{S}
$$
for all $\sigma \in \mathcal P_R(V)$. In addition, 
$$
\left(\int_{K_y}|a^{ij}_\delta(y,\xi_k,\sigma) - a^{ij}_\delta(y, \xi_k, \sigma_n)|^{r'}dy\right)^{1/r'} \le \left(\int_{K_y}|a^{ij}(y,\xi_k,\sigma) - a^{ij}(y, \xi_k, \sigma_n)|^{r'}dy\right)^{1/r'}
$$
Thus, we obtain the inequality
$$
\int_{K_y}|a^{ij}(y,\xi_k, \sigma) - a^{ij}_\delta(y,\xi_k, \sigma)|\rho(y)dy \le 2\varepsilon + S\left(\int_{K_y}|a^{ij}(y,\xi_k, \sigma_n) - a^{ij}_\delta(y,\xi_k, \sigma_n)|^{r'}\,dy\right)^{1/r'}
$$
for all $\sigma \in \mathcal P_R(V)$ and for some $n \in\{1,\ldots, M\}$. Let us choose $\delta$ such that the right-hand side of the inequality is less than $3\varepsilon$. The required estimate for the coefficients $b^i$ is obtained in the same way. This completes the proof.
\end{proof}

\section{Proof of Theorem 2.2}
\label{sec5}

Let us first prove Theorem \ref{cor2}.

\begin{proof}[Proof of Theorem \ref{cor2}]
	
	Let us fix a number $R \ge C / \Lambda$ such that the set $\mathcal P_{R}(V)$ is nonempty. The assumptions of the theorem allow us to apply the case of \cite[Theorem 1]{ShSh24} when $\rm{(H_b)}$ is fulfilled. Indeed, by (H1) conditions $\rm{(H_a)}, \rm{(H_b)}$ of \cite[Theorem 1]{ShSh24} are satisfied and (\ref{cor2: cond}) implies the inequality $L_\mu V \le -M$ for all $x$ outside some ball centered at zero, all $\sigma \in \mathcal P(V)$ and some $M > 0$. By \cite[Theorem 1]{ShSh24} for all $\sigma \in \mathcal P_R(V)$ there exists a probability solution $\mu$ to the equation
	$$
	L^*_\sigma \mu =
	\sum_{i,j=1}^d\partial_{x_i}\partial_{x_j}\bigl(a^{ij}(x,\sigma)\mu\bigr)-\sum_{i=1}^d\partial_{x_i}\bigl(b^i(x,\sigma)\mu\bigr)=0.
	$$
	According to \cite[Theorem 2.3.2]{book}, we have the estimate
	$$
	\int_{\mathbb R^d} Vd\mu \le \frac{C}{\Lambda}.
	$$
	Hence $\mu \in \mathcal P_{R}(V)$. 
	
	Let $\Phi(\sigma)$ denote the set of all solutions $\mu\in\mathcal P_{R}(V)$ to the equation $L^*_\sigma\mu = 0$. This is a nonempty and convex subset of the convex compact set $\mathcal P_{R}(V)$. Let us verify that the graph of the mapping $\sigma \mapsto \Phi(\sigma)$ is closed. Suppose that a sequence $\sigma_n\in\mathcal P_{R}(V)$ converges weakly to a measure $\sigma\in\mathcal P_{R}(V)$. Then the sequence $\sigma_n$ converges $V$-weakly to the measure $\sigma$.
	We verify that $L^*_\sigma \mu = 0$. Let $\varphi\in C_0^\infty(\mathbb R^d)$ and the following identity is fulfilled
	$$
	\int_{\mathbb R^d} L_{\sigma_n}\varphi d\mu_n = 0.
	$$
	Let us denote the compact support of $\varphi$ by $K$. We have
	$$
	\int_{\mathbb R^d} L_\sigma \varphi d\mu = \int_K (L_\sigma \varphi - L_{\sigma_n}\varphi) d\mu_n + \int_K L_{\sigma,\delta} \varphi d(\mu - \mu_n) + \int_K \left(L_\sigma - L_{\sigma,\delta}\right) \varphi d(\mu - \mu_n).
	$$
	where $L_{\sigma,\delta}$ is an operator with the coefficients $a^{ij}_\delta, b^i_\delta$. The procedure for constructing $a^{ij}_\delta, b^i_\delta$ is described before Lemma \ref{lemma: approx}. Therefore, the following estimate holds
	\begin{multline*}
		\left|\int_{\mathbb R^d} L_\sigma \varphi d\mu\right| \le \left|\int_K (L_\sigma \varphi - L_{\sigma_n}\varphi) d\mu_n\right| + \left|\int_K L_{\sigma,\delta} \varphi d(\mu - \mu_n)\right| +\\ 
		C(\varphi)\int_K \|A_\sigma-A_{\sigma, \delta}\|+|b_\sigma-b_{\sigma, \delta}| d\mu + C(\varphi)\sup_k\int_K \|A_\sigma-A_{\sigma, \delta}\|+|b_\sigma-b_{\sigma, \delta}| d\mu_k,
	\end{multline*}
	where we denote
	$$
	C(\varphi)=\sup_x|\nabla\varphi(x)|+\sup_x\|D^2\varphi(x)\|.
	$$
	Let the cube $Q=Q_y\times Q_z$ contain the closure of the cube $K$ and the distance from the boundary of the cube $Q_y$ to the boundary of the cube $K_y$ be greater than one. Let $\eta \in C_0^\infty(\mathbb R^{d-m})$, $0 \le \eta \le 1$ and $\eta(z) = 1$ for all $z \in Q_z$. According to Lemma \ref{lemma: regular}, we have $\mu_n \in \mathcal M_{K, S}(\eta)$ for some $S > 0$. Then by Lemma \ref{lemma: converg} we obtain
	\begin{multline*}
		\left|\int_{\mathbb R^d} L_\sigma \varphi d\mu\right| \le C(\varphi)\int_K \|A_\sigma-A_{\sigma, \delta}\|+|b_\sigma-b_{\sigma, \delta}| d\mu + C(\varphi)\sup_k\int_K \|A_\sigma-A_{\sigma, \delta}\|+|b_\sigma-b_{\sigma, \delta}| d\mu_k,
	\end{multline*}
	as $n$ tends to infinity. By Lemma \ref{lemma: approx} we get
	$$
	\int_{\mathbb R^d} L_\sigma \varphi d\mu = 0.
	$$	
	Thus, the graph of the mapping $\sigma \mapsto \Phi(\sigma)$ is closed. According to the Kakutani–Ky Fan theorem (see, for instance, \cite{BS17}), the mapping $\sigma \mapsto \Phi(\sigma)$ has a fixed point $\mu$, which is a solution to equation (\ref{eq1}).
\end{proof}

Let us prove Theorem \ref{main_result}.
\begin{proof}[Proof of Theorem \ref{main_result}]
	
	Let $B_n$ denote the ball with center at zero and radius $n$ in $\mathbb R^d$. Let $\varphi_n\in C_0^\infty(\mathbb R^d)$, $0 \le \varphi_n \le 1$, $\varphi_n = 1$ on $B_n$ and $\varphi_n = 0$ outside $B_{n+1}$. For every probability measure $\mu$ we denote
	$$I_{\mu,n} = \int_{\mathbb R^d}\varphi_nd\mu.$$
	Let $\delta_0$ denote the probability measure with support at zero. We introduce the operator
	$$
	L_{\mu, n} f(x) = \varphi_n(x)L_{\varphi_n\mu + (1-I_{\mu,n})\delta_0}f(x) - \Lambda(1 - \varphi_n(x))V(x),
	$$
	By ${\rm (H1.2)}$ the inequality
	$$
	L_{\mu,n}V(x) \le C_n - \Lambda V(x)
	$$	
	holds for all $x\in \mathbb R^d$ and $\mu \in \mathcal P(V)$, where the number $C_n$ depends only on $V$ and $n$. By~Theorem~\ref{cor1} for all $n$ there exists a probability solution $\mu_n \in \mathcal P(V)$ to the equation $L_{\mu,n}^*\mu = 0$.
	
	Let us show that $\mu_n \in \mathcal P_{R}(V)$, where the number $R$ does not depend on $n$. We denote 
	$$
	\nu_n = \varphi_n\mu_n + (1-I_{\mu_n,n})\delta_0
	$$
	By ${(\rm H1.2)}$ the coefficients of $L_{\mu_n,n}$ are bounded on $B_{n+1}$ and $L_{\mu_n,n}W(x,\nu_n) = -\Lambda V(x)$ for all~$x$~outside the ball $B_{n+1}$. Then the function $L_{\mu_n,n}W(x,\nu_n)$ is integrable with respect to $\mu_n$. Applying \cite[Theorem 2.3.2]{book} to $\Psi(x) = \max\{L_{\mu_n,n}W(x,\nu_n),0\},\,\Phi(x) = \max\{-L_{\mu_n,n}W(x,\nu_n),0\}$, we get
	\begin{equation}
		\label{ineq: nonneg}
		\int_{\mathbb R^d} L_{\mu_n,n} W(x,\nu_n)\,d\mu_n \ge 0.
	\end{equation}
	In addition, we have
		\begin{multline*}
		\int_{\mathbb R^d}L_{\mu_n, n}W(x,\nu_n)d\mu_n =\\
		\int_{\mathbb R^d} L_{\nu_n}W(x, \nu_n)d\nu_n - \Lambda \int_{\mathbb R^d}(1 - \varphi_n(x))V(x)d\mu_n -
		(1 - I_{\mu_n,n})	L_{\nu_n}W(0, \nu_n).
	\end{multline*}
Since the measure $\nu_n$ has a compact support, we can substitute $\nu_n$ in ${(\rm H3.1)}$. We have
$$
\int_{\mathbb R^d}L_{\mu_n, n}W(x,\nu_n)d\mu_n \le C - \Lambda \int_{\mathbb R^d} V(x)d\nu_n - \Lambda \int_{\mathbb R^d}(1 - \varphi_n(x))V(x)d\mu_n -
(1 - I_{\mu_n,n})	L_{\nu_n}W(0, \nu_n).
$$
The measure $(\nu_n - \varphi_n\mu_n)$ is nonnegative. Then we obtain
$$
\int_{\mathbb R^d}L_{\mu_n, n}W(x,\nu_n)d\mu_n \le C - \Lambda \int_{\mathbb R^d} V(x)d\mu_n - (1 - I_{\mu_n,n})	L_{\nu_n}W(0, \nu_n).
$$
By ${(\rm H3.2)}$ we have
$$
-L_{\nu_n}W(0, \nu_n) \le C_1 + C_2H(0) + C_2 \int_{\mathbb R^d}Hd\mu_n
$$
and there exists a number $r > 0$ such that $H(x) \le \varepsilon V(x)$ for all $x$ outside the ball $B_r$ of radius~$r$ centered at zero. Then
$$
\int_{\mathbb R^d}Hd\mu_n \le \max_{x\in B_r} H(x) + \varepsilon\int_{\mathbb R^d\setminus B_r}Vd\mu_n.
$$
Choosing $\varepsilon > 0$ such that $\varepsilon C_2 \le \Lambda / 2$, we obtain
$$
	\int_{\mathbb R^d}L_{\mu_n, n}W(x,\nu_n)d\mu_n \le M - \frac{\Lambda}{2}\int_{\mathbb R^d} V(x)d\mu_n,
$$
where the number $M$ does not depend on $n$. Combining this estimate with inequality (\ref{ineq: nonneg}), we obtain
$$
\int_{\mathbb R^d} V(x)d\mu_n \le \frac{2M}{\Lambda}.
$$
Thus, $\mu_n \in \mathcal P_{R}(V)$ for $R = 2M/\Lambda$ and there exists a subsequence $\mu_{n_k}$ converging $V$-weakly to a measure $\mu \in \mathcal P_{R}(V)$. 

Let us show that $\mu$ is a solution to equation (\ref{eq1}). Suppose that $\psi\in C_0^\infty(\mathbb R^d)$, then the following equality is fulfilled
$$
\int_{\mathbb R^d} L_{\mu_n}\varphi d\mu_n = 0.
$$
We denote the compact support of $\psi$ by $K$. One has the equality
$$
\int_{\mathbb R^d} L_\mu \psi d\mu = \int_K (L_\mu \psi - L_{\mu_n, n}\varphi) d\mu_n + \int_K L_{\mu,\delta} \psi d(\mu - \mu_n) + \int_K \left(L_\mu - L_{\mu,\delta}\right) \psi d(\mu - \mu_n).
$$
Therefore, the following estimate holds
\begin{multline*}
	\left|\int_{\mathbb R^d} L_\mu \psi d\mu\right| \le \left|\int_K (L_\mu \psi - \varphi_nL_{\nu_n}\psi) d\mu_n\right| + \Lambda\left|\int_K (1-\varphi_n) V d\mu_n\right| + \left|\int_K L_{\mu,\delta} \psi d(\mu - \mu_n)\right| +\\ 
	C(\psi)\int_K \|A_\mu-A_{\mu, \delta}\|+|b_\sigma-b_{\mu, \delta}| d\mu + C(\psi)\sup_k\int_K \|A_\mu-A_{\mu, \delta}\|+|b_\mu-b_{\mu, \delta}| d\mu_k,
\end{multline*}
where we denote
$$
C(\psi)=\sup_x|\nabla\psi(x)|+\sup_x\|D^2\psi(x)\|.
$$
Let the cube $Q=Q_y\times Q_z$ contain the closure of the cube $K$ and the distance from the boundary of the cube $Q_y$ to the boundary of the cube $K_y$ be greater than one. Let $\eta \in C_0^\infty(\mathbb R^{d-m})$, $0 \le \eta \le 1$ and $\eta(z) = 1$ for all $z \in Q_z$. According to Remark \ref{rm: reg_with_poten}, we have $\mu_n \in \mathcal M_{Q, S}(\eta)$ for some $S > 0$. Since the sequence $\nu_n$ converges weakly to the measure $\mu$ and $\nu_n \in \mathcal P_{R'}(V)$ for $R' = R + V(0)$, by Lemma \ref{lemma: converg} we obtain
$$
\lim_{n\rightarrow\infty}\left|\int_K (L_\mu \psi - \varphi_nL_{\nu_n}\psi) d\mu_n\right| = 0.
$$
Therefore, we have
\begin{multline*}
	\left|\int_{\mathbb R^d} L_\mu \psi d\mu\right| \le C(\varphi)\int_K \|A_\sigma-A_{\sigma, \delta}\|+|b_\sigma-b_{\sigma, \delta}| d\mu + C(\varphi)\sup_k\int_K \|A_\sigma-A_{\sigma, \delta}\|+|b_\sigma-b_{\sigma, \delta}| d\mu_k,
\end{multline*}
as $n$ tends to infinity. By Lemma \ref{lemma: approx} we get
$$
\int_{\mathbb R^d} L_\mu \psi d\mu = 0,
$$	
Thus, $\mu$ is a solution to equation (\ref{eq1}). This completes the proof.
\end{proof}

\vspace*{0.2cm}

\begin{proof}[Proof of Corollary \ref{cor1}]
	By inequality (\ref{cor: cond2}) assumption ${\rm (H3.1)}$ is fulfilled for $W(x,\mu)=V(x).$ Then we have
	\begin{multline*}
		-L_\mu W(0, \mu) = -\sum_{i,j = 1}^d a^{ij}(0,\mu) \partial_{x_i}\partial_{x_i}V(0) - \sum_{i,j = 1}^d b^i(0,\mu) \partial_{x_i}V(0) \le\\ 
	d^2C(V) \max_{i,j}\left(|a^{ij}(0,\mu)| +|b^i(0,\mu)|\right)
	\end{multline*}
	where $C(V) = |\nabla V(0)|+\|D^2 V(0)\|.$ By assumption (\ref{cor: cond1}) we obtain the estimate
	$$
	-L_\mu W(0, \mu) \le d^2C(V)\left(C_1 + C_2 \int_{\mathbb R^d}H(x)\mu(dx)\right).
	$$
	Thus, the conditions of Theorem \ref{main_result} is satisfied and equation {\rm (\ref{eq1})} has a probability solution in the set $\mathcal P(V)$. This completes the proof.
\end{proof}

\vspace*{0.2cm}

The proof of Theorem \ref{th2} is a simplified version of the proof of Theorem \ref{main_result}.

\begin{proof}[Proof of Theorem \ref{th2}]
	Let $B_n$ denote the ball with center at zero of radius $n$ in $\mathbb R^d$. Suppose that $\varphi_n\in C_0^\infty(\mathbb R^d)$, $0 \le \varphi_n \le 1$, $\varphi_n = 1$ on $B_n$ and $\varphi_n = 0$ outside $B_{n+1}$. Let us introduce the operator 
	$$ 
	L_{\mu, n} f(x) = \varphi_n(x)L_{\varphi_n\mu}f(x) - \Lambda(1 - \varphi_n(x))V(x). 
	$$
	The inequality
	$$ 
	L_{\mu,n}V \le C_n - \Lambda V. 
	$$
	holds for all $x\in \mathbb R^d$ and $\mu \in \mathcal P(V)$, where the number $C_n$ depends only on $W$ and $n$. By~Theorem \ref{cor1} for all $n$ there exists a probability solution $\mu_n \in \mathcal P(V)$ to the equation $L_{\mu,n}^*\mu = 0$. 
	
	Let us show that $\mu_n \in \mathcal P_{R}(V)$, where the number $R$ does not depend on $n$. By ${(\rm H1.2)}$ the coefficients of $L_{\mu_n,n}$ are bounded on $B_{n+1}$ and $L_{\mu_n,n}W(x,\varphi_n\mu_n) = -\Lambda V(x)$ for all $x$ outside the ball $B_{n+1}$. Then the function $L_{\mu_n,n}W(x,\varphi_n\mu_n)$ is integrable with respect to the measure $\mu_n$. Applying \cite[Theorem 2.3.2]{book} to $\Psi(x) = \max\{L_{\mu_n,n}W(x,\varphi_n\mu_n),0\},\,\Phi(x) = \max\{-L_{\mu_n,n}W(x,\varphi_n\mu_n),0\}$, we get
	\begin{equation}
		\label{ineq: nonneg2}
		\int_{\mathbb R^d} L_{\mu_n,n} W(x,\varphi_n\mu_n)\,d\mu_n \ge 0.
	\end{equation}
Since the measure $\varphi_n\mu_n$ has a compact support, we can substitute $\varphi_n\mu_n$ in ${(\rm H3.1)}$. Then we have
$$
		\int_{\mathbb R^d}L_{\mu_n, n}W(x,\varphi_n\mu_n)d\mu_n \le C - \Lambda \int_{\mathbb R^d}V(x)d\mu_n.
$$
Combining this estimate with inequality (\ref{ineq: nonneg2}), we obtain
$$
\int_{\mathbb R^d} V(x)d\mu_n \le \frac{C}{\Lambda}.
$$
Thus, $\mu_n \in \mathcal P_{R}(V)$ for $R = C/\Lambda$ and there exists a subsequence $\mu_{n_k}$ converging $V$-weakly to a measure $\mu \in \mathcal P_{R}(V)$. The proof that $\mu$ is a solution to equation (\ref{eq1}) is identical to the proof of this fact in Theorem \ref{main_result}. We just need to replace $\nu_n$ by $\varphi_n \mu_n$ in that proof.
\end{proof}

\vspace*{0.2cm}

\centerline{\bf Acknowledgements.}

\vspace*{0.2cm}

The authors are grateful to S.V. Shaposhnikov for fruitful discussions and valuable remarks.

This paper is supported by the Theoretical Physics and Mathematics Advancement Foundation~``BASIS".


\begin{thebibliography}{}
	
\bibitem{ABR19}
H. AlRachid, M. Bossy, C. Ricci, L. Szpruch,
New particle representations for ergodic McKean--Vlasov SDEs.
ESAIM: Proceedings and Surveys 65 (2019): 68-83.
https://doi.org/10.1051/proc/201965068

\bibitem{A20}
Yu. Averboukh, 
Viability analysis of the first-order mean field games, 
ESAIM: Control, Optimisation and Calculus of Variations 26 (2020): 33.
https://doi.org/10.1051/cocv/2019013.

\bibitem{AV26}
Yu. Averboukh, A. Volkov,
Necessary condition of asymptotic stability for non-local continuity equation, 
Journal of Mathematical Sciences (2026): 1-7.
https://doi.org/10.1007/s10958-025-08170-9

\bibitem{book}
V.I. Bogachev, N.V. Krylov,  M. R\"ockner, S.V. Shaposhnikov
Fokker--Planck--Kolmogorov equations,
American Mathematical Society, 2015.

\bibitem{BRSsup23}
V.I. Bogachev, M. R\"ockner, S.V. Shaposhnikov, 
On the Ambrosio--Figalli--Trevisan superposition principle for probability solutions to
Fokker--Planck--Kolmogorov equations,
Journal of Dynamics and Differential Equations 33(2) (2021): 715-739.
https://doi.org/10.1007/s10884-020-09828-5

\bibitem{ZV23}
V.I. Bogachev, M. R\"ockner, S.V. Shaposhnikov,
Zvonkin’s transform and the regularity of solutions to double divergence form elliptic equations.
Communications in Partial Differential Equations 48(1), 119–149. https://doi.org/10.1080/03605302.2022.2139724

\bibitem{BS24}
V.I. Bogachev, S.V. Shaposhnikov, 
Nonlinear Fokker-Planck-Kolmogorov equations,
Uspekhi Matematicheskikh Nauk 79(5) (2024): 3-60.
https://doi.org/10.4213/rm10202e

\bibitem{BSS25}
V.I. Bogachev, S. V. Shaposhnikov, D. V. Shatilovich,
Doubling variables and uniqueness of probability solutions to degenerate stationary Kolmogorov equations,
Journal of Dynamics and Differential Equations (2026): 1-19.
https://doi.org/10.1007/s10884-026-10497-z

\bibitem{BS17}
V.I. Bogachev, O.G. Smolyanov, 
Topological vector spaces and their applications, 
Cham: Springer, 2017.
https://doi.org/10.1007/978-3-319-57117-1

\bibitem{ButS17}
O. Butkovsky, M. Scheutzow, 
Invariant measures for stochastic functional differential equations,
Electronic Journal of Probability 22 (2017) 1-23. 
https://doi.org/10.1214/17-EJP122

\bibitem{CG15}
P. Cardaliaguet, P.J. Graber, 
Mean field games systems of first order, ESAIM: Control, Optimisation and Calculus of Variations 21(3) (2015): 690-722.
https://doi.org/10.1051/cocv/2014044

\bibitem{CD18}
R. Carmona, F.Delarue, 
Probabilistic theory of mean field games with applications I-II, 
Berlin: Springer Nature, 2018.
https://doi.org/10.1007/978-3-319-58920-6,
https://doi.org/10.1007/978-3-319-56436-4

\bibitem{J16}
A. J\"ungel, 
Entropy methods for diffusive partial differential equations, 
Cham: Springer, 2016.
https://doi.org/10.1007/978-3-319-34219-1

\bibitem{HSS21}
W. R. Hammersley, D. Siska, L. Szpruch, 
McKean–Vlasov SDEs under measure dependent Lyapunov conditions,
Annales de l'Institut Henri Poincar\'e, Probabilités et Statistiques 57(2) (2021): 1032-1057.

\bibitem{HRW21}
X. Huang, P. Ren, F. Y. Wang, 
Distribution dependent stochastic differential equations, Frontiers of Mathematics in China, 16(2), (2021): 257-301. 
https://doi.org/10.1007/s11464-021-0920-y

\bibitem{IW89}
N. Ikeda, S. Watanabe, 
Stochastic differential equations and diffusion processes,
North-Holland Mathematical Library, 1981.

\bibitem{Kac56}
M. Kac, 
Foundations of kinetic theory, 
Proceedings of the Third Berkeley Symposium on Mathematical Statistics and Probability, University of California Press (1956), 171–197.

\bibitem{K19}
V.N. Kolokoltsov, 
Differential equations on measures and functional spaces, 
Switzerland: Springer International Publishing, 2019.
https://doi.org/10.1007/978-3-030-03377-4

\bibitem{KT19}
V.N. Kolokoltsov, M. Troeva,
On mean field games with common noise and McKean-Vlasov SPDEs, 
Stochastic Analysis and Applications 37(4) (2019): 522-549.
https://doi.org/10.1080/07362994.2019.1592690

\bibitem{LL07}
J. M. Lasry, P. L. Lions, 
Mean field games, 
Japanese journal of mathematics 2(1), 229-260.
https://doi.org/10.1007/s11537-007-0657-8

\bibitem{LiSt}
H. Lee, G. Trutnau, 
Existence and regularity of infinitesimally invariant measures, transition functions and time-homogeneous Ito-SDEs,
Journal of Evolution Equations 21(1) (2021): 601-623.
https://doi.org/10.1007/s00028-020-00593-y

\bibitem{McK66}
H.P. McKean, 
A class of markov processes associated with nonlinear parabolic equations, 
Proceedings of the National Academy of Sciences of the United States of America 56(6) (1966): 1907–1911.

\bibitem{McK67}
H.P. McKean, 
Propagation of chaos for a class of non-linear parabolic equations, 
Stochastic Differential Equations (1967): 41–57.

\bibitem{MV20}
Y.S. Mishura, A.Y. Veretennikov, 
Existence and uniqueness theorems for solutions of McKean–Vlasov stochastic equations, Theory of Probability and Mathematical Statistics, 103 (2020): 59–101.

\bibitem{R23}
A. Rebucci, 
Regularity results and new perspectives for degenerate Kolmogorov equations,
Ph.D. thesis, Universit\`a di Parma, 2023.

\bibitem{SS25}
S.V. Shaposhnikov, D.V. Shatilovich,
Analysis of mean field games via Fokker--Planck--Kolmogorov equations: existence of equilibria, Nonlinear Analysis 270 (2026).

\bibitem{ShSh24}
S.V. Shaposhnikov, D.V. Shatilovich,
Khasminskii's Theorem for the Kolmogorov Equation with Partially Degenerate Diffusion Matrix, Mathematical Notes 115(3) (2024): 427--438.
https://doi.org/10.1134/S0001434624030155.

\bibitem{Vlasov68}
A.A. Vlasov, 
The vibrational properties of an electron gas, 
Soviet Physics Uspekhi 10(6) (1968): 721-733.
https://doi.org/10.1070/PU1968v010n06ABEH003709

\bibitem{W21}
F.Y. Wang, 
Exponential ergodicity for non-dissipative McKean-Vlasov SDEs, 
Bernoulli 29(2) (2023): 1035-1062.
https://doi.org/10.3150/22-BEJ1489

\bibitem{Z21}
S.-Q. Zhang, 
Existence and non-uniqueness of stationary distributions for distribution dependent SDEs,
Electronic Journal of Probability 28 (2023): 1-34.
https://doi.org/10.1214/23-EJP981

\end{thebibliography}
\end{document}